\newtheorem{theorem}{Theorem}[section]
\newtheorem{proposition}{Proposition}
\theoremstyle{definition}
\newtheorem{definition}[theorem]{Definition}
\title[Enhancement of chemotherapy using oncolytic virotherapy]{Enhancement 
of chemotherapy using oncolytic virotherapy: Mathematical and optimal control analysis}
\author[J. Malinzi, R. Ouifki, A. Eladdadi, D.F.M. Torres and K.A.J. White]{}
\subjclass{Primary: 49K15; Secondary: 92B05.}
\keywords{Chemovirotherapy, Oncolytic virotherapy, optimal drug, virus combination.}
\email{Josephmalinzi@aims.ac.za}
\email{Rachid.Ouifki@up.ac.za}
\email{eladdada@strose.edu} 
\email{delfim@ua.pt}
\email{K.A.J.White@bath.ac.uk}
\thanks{$^*$ Corresponding authors: Joseph Malinzi \& Amina Eladdadi }
\begin{document}

\maketitle


\centerline{\scshape Joseph Malinzi$^{*1}$, Rachid Ouifki$^1$}
\medskip
{\footnotesize
 \centerline{$^1$ Department of Mathematics and Applied Mathematics, University of Pretoria,}
 \centerline{Private Bag X 20, Hatfield, Pretoria 0028, South Africa}} 

\medskip

\centerline{\scshape Amina Eladdadi$^{*2}$, Delfim F.M. Torres$^3$ and K.A. Jane White$^4$}
\medskip
{\footnotesize
 \centerline{$^2$Department of Mathematics, The College of Saint Rose, Albany, New York, USA}
 \centerline{$^3$Center for Research and Development in Mathematics and Applications (CIDMA),}
 \centerline{ Department of Mathematics, University of Aveiro, 3810-193 Aveiro, Portugal}
 \centerline{$^4$Department of Mathematical Sciences, University of Bath,}
 \centerline{Claverton Down, Bath BA2 7AY, UK}}

\bigskip


\begin{abstract}
Oncolytic virotherapy has been emerging as a promising novel cancer treatment 
which may be further combined with the existing therapeutic modalities 
to enhance their effects. To investigate how virotherapy could enhance chemotherapy, 
we propose an ODE based mathematical model describing the interactions between tumour cells, 
the immune response, and a treatment combination with chemotherapy and oncolytic viruses.
Stability analysis of the model with constant chemotherapy treatment rates shows that 
without any form of treatment, a tumour would grow to its maximum size. It also demonstrates 
that chemotherapy alone is capable of clearing tumour cells provided that the drug efficacy 
is greater than the intrinsic tumour growth rate. Furthermore, virotherapy alone may not be 
able to clear tumour cells from body tissue but would rather enhance chemotherapy if viruses 
with high viral potency are used. To assess the combined effect of virotherapy and chemotherapy 
we use the forward sensitivity index to perform a sensitivity analysis, with respect to chemotherapy 
key parameters, of the virus basic reproductive number and the tumour endemic equilibrium. 
The results from this sensitivity analysis indicate the existence of a \textit{critical} dose 
of chemotherapy above which no further significant reduction in the tumour population 
can be observed. Numerical simulations show that a successful combinational therapy 
of the chemotherapeutic drugs and viruses depends mostly on the virus burst size, 
infection rate, and the amount of drugs supplied. 
Optimal control analysis was performed, by means of the Pontryagin's maximum principle, 
to further refine predictions of the model with constant treatment rates by accounting 
for the treatment costs and sides effects. Results from this analysis suggest that the 
optimal drug and virus combination correspond to half their maximum tolerated doses. 
This is in agreement with the results from stability and sensitivity analyses. 
\end{abstract}


\section{Introduction}

Combination therapy strategies have shown significant promise for 
treating cancers that are resistant to conventional modalities. 
Oncolytic virotherapy  (OV) is an emerging treatment modality that uses replication 
competent viruses to destroy cancers \cite{ES07,SK12,TE07,JPH14}. Their tumour specific 
properties allow for viral binding, entry, and replication \cite{AL14}. 
Various studies have investigated combination strategies with OV and chemotherapeutic drugs 
to optimize both viral oncolysis and the effect of the added therapy 
\cite{binz2015chemovirotherapy,AL14,relph2016cancer}. Oncolytic viruses 
can greatly enhance the cytotoxic mechanisms of chemotherapeutics \cite{PJ10}. 
Furthermore, chemotherapeutic drugs lyse fast multiplying cells and, in general, 
virus infected tumour cells quickly replicate \cite{B12}. In most combination studies 
with chemotherapy drugs, apoptosis was increased but viral replication was not enhanced 
\cite{AL14,UF10,AG07}. For example, Nguyen et al. \cite{AL14} gave an account 
of the mechanisms through which drugs can successfully be used in a combination 
with oncolytic viruses. They, however, noted that the success of this combination 
depended on several factors including the type of oncolytic virus (OV)-drug combination 
used, the timing, frequency, dosage, and cancer type targeted. A review of recent 
clinical studies by Binz and Urlrich \cite{binz2015chemovirotherapy} shows the 
classification of possible combination of drugs and oncolytic viruses. 
Oncolytic virotherapy is still ongoing clinical trials \cite{UF10, AG07}, 
and some virus-drug combos, for example talimogene laherparepvec, 
have been approved for the treatment of melanoma \cite{FDA2017}. 
   
While, there is a growing body of scientific research showing that combination 
therapies are the cutting edge for cancer treatment, the design of an optimal protocol 
remains an open question. The chemo-viro therapy combination is no exception. 
Despite a noticeable success in the clinical and experimental studies to investigate 
and characterise the treatment of cancer with chemotherapeutics-virus combinations, 
much is still not understood about chemovirotherapy. Some of the main open questions 
in designing an optimal chemovirotherapy treatment are figuring out the right dose combination, 
the most effective method of drug infusion, and the important treatment 
characteristics \cite{konstorum2017addressing}. 

Mathematical modelling and optimal control theory have over the years played 
an important role in answering such important research questions which would 
cost much to set up experimentally. Several mathematical studies including 
\cite{LKW09, LH05, SHF02} have addressed the dynamics of oncolytic virotherapy treatment. 
For example, Ursher \cite{JR94} gave a summary of some mathematical models for chemotherapy. 
Tian \cite{JP11} presented a mathematical model that incorporates burst size for oncolytic 
virotherapy. His analysis showed that there are two threshold burst size values and below 
one of them the tumour always grows to its maximum size. His study affirmed that a tumour 
can be greatly reduced to low undetectable cell counts when the burst size is large enough. 
A recent study by Malinzi et al. \cite{malinzi2017modelling} showed that that chemotherapy 
alone was not able to deplete tumour cells from body tissue but rather in unison with 
oncolytic viruses could possibly reduce the tumour concentration to a very low undetectable state. 
Other similar mathematical models on virotherapy include \cite{JPH14,YJ13,D01,AFE06,MA11}. 

In this paper, we address the question on: \textit{``what is the optimal chemotherapeutic 
drug and virus dosage combination for the elimination of tumour cells in body tissue?''} 
To this end, we develop a mathematical model and optimal control problem that account 
for the combination of cancer treatment using chemotherapy and virotherapy.  
The paper is organised as follows. Section~\ref{Model} is devoted to the model 
description and the underlying assumptions. In Section~\ref{modelanalysis}, 
we analyse three sub-models: without-treatment model, chemotherapy-only model, 
and the oncolytic virotherapy-only model. The analysis of the whole model, that is, 
the chemotherapeutic drug combined with the virus therapy is detailed 
in Section~\ref{chemoviro}. In Section~\ref{sec:Optimal_control_analysis}, 
we investigate an optimal control problem with a quadratic objective function 
to determine the optimal dosage combination of the chemotherapy drug and the virotherapy. 
The numerical simulations illustrating our theoretical results are presented 
in Section~\ref{sec:Numerical_simulations}. We conclude with a discussion 
in Section~\ref{conclusion}. 
	

\section{Model Formulation}
\label{Model}

In this section, we propose and formulate our new mathematical model describing the growth 
of an avascular solid tumour under the effect of both chemotherapy and oncolytic  virotherapy 
treatments. The model considers six state variables summarized in Table~\ref{tab:modelvars}. 
We first start by stating the underlying model assumptions in Subsection~\ref{assumptions}, 
followed by the model equations and their detailed description in Subsection~\ref{ModelEquations}.


\subsection{Model description and assumptions}
\label{assumptions}

To model the effects of the chemo-virotherapy on the tumour, we consider the dynamics 
of the following interacting cell populations: tumour cells (both uninfected and virus-infected cells), 
immune cells (both virus and tumour specific immune responses), the free virus 
and the drug concentrations.  Based on the discussion above and the scientific literature 
on the interactions between the uninfected and the virus-infected tumour cells in the presence 
of the oncolytic virotherapy and chemotherapy treatments, the following 
assumptions are made in setting up the model: 
\begin{enumerate}

\item Without treatment, the tumour grows logistically 
with a carrying capacity $K$ \cite{JP11}.

\item Virus infection, chemotherapeutic drug response to the tumour, 
chemokine production and immune cells proliferation are considered to be 
of Michaelis--Menten form to account for saturation 
\cite{de2006modeling,agrawal2014optimal,pinho2013mathematical}.

\item The model accounts for both virus and tumour specific 
immune responses \cite{howells2017oncolytic}.

\item Virus production is a function of virus burst size and the death 
of infected immune cells. The number of viruses therefore increases 
as infected tumour cells density multiplies \cite{JP11}.

\item We consider both virus and tumour specific immune responses. 
Virus-specific immune response is proportional to the infected tumour cell 
numbers whilst the tumour specific immune response is taken to be of 
Michaelis--Menten form to account for saturation in the immune proliferation 
\cite{khajanchi2014stability}. 

\item We consider a case where drug infusion per day is constant. The constant 
infusion rate may relate to a situation where a patient is put on an intravenous 
injection or a protracted venous infusion and the drug is constantly pumped into the body. 
This form of drug dissemination is used on cancer patients who stay in the hospital 
for a long period of time. Higher doses of certain anti-cancer drugs may however lead 
to hepatic veno-occlusive disease, a condition where the liver is obstructed as a result 
of using high-dose chemotherapy \cite{RB83, MHY88}. Another more realistic consideration 
would be to use periodic infusion \cite{MH08,SGM05}. This would lead to a periodic system 
of equations for which standard theory on periodic systems applies 
(see for example \cite{ouifki2007model}). This case is, however, 
not dealt with in this work, and will be studied in a future work.
\end{enumerate}


\subsection{Model equations} 
\label{ModelEquations}

The proposed mathematical model consists of the following system of 
six differential equations (\ref{eq:uninfected})--(\ref{eq:drug}): 
\begin{align}
\frac{dU}{dt} & = \alpha U\left(1-\frac{U+I}{K}\right)
- \frac{\beta UV}{K_u +U} -\nu_U U E_T  - \frac{\delta_U  UC}{K_c +C} \label{eq:uninfected}\\
\frac{dI}{dt} & = \frac{\beta UV}{K_u +U}- \delta I
- \nu_I E_T I -\tau E_V I -\frac{\delta_I IC}{K_c +C} \label{eq:infected} \\
\frac{dV}{dt} & = b\delta I- \frac{\beta UV}{K_u +U}-\gamma V  \label{eq:virus} \\
\frac{dE_V}{dt} & = \phi I - \delta_V E_V \label{eq:immune_v} \\
\frac{d E_T}{dt} & = \frac{\beta_T (U+I)}{\kappa+ (U+I)}-\delta_T E_T \label{eq:immune_T} \\
\frac{dC}{dt} &= g(t) - \psi C \label{eq:drug}
\end{align}
The model variables and parameters, their meaning and base values are summarised 
in Table~\ref{tab:modelvars} and Table~\ref{tab:parametervalues1} respectively. 
\begin{table}[h!]
\centering
\caption{The model variables}
\label{tab:modelvars}
\begin{tabular}{@{}lll@{}} \hline
Variable & Description & Units \\ \hline
$U(t)$   & Uninfected tumour density  & cells per mm$^3$  \\
$I(t)$   & Virus infected tumour cell density & cells per mm$^3$  \\
$V(t)$   & Free virus particles               & virions per mm$^3$ \\
$E_v(t)$ & Virus specific immune response     & cells per mm$^3$ \\
$E_T(t)$ & Tumour specific immune response    & cells per mm$^3$ \\
$C(t)$   & Drug concentration                 & grams per millilitre (g/ml) \\ \hline
\end{tabular}
\end{table}

The initial conditions for the model are:
\begin{equation}
\label{ICs}  
U(0) = U_{0},  I(0) = I_{0} ,  V(0) = V_{0},  E_V(0) = E_{V_0},   
E_T(0) = E_{T_0},  C(0) = C_{0}, 
\end{equation}
where the  constants $U_{0}$, $I_{0}$,  $V_{0}$,  $E_{V_0}$,  $E_{T_0}$,  
and $C_{0}$ denote the initial concentrations of the uninfected tumour, 
virus infected tumour, free virus particles, virus specific immune, 
tumour specific immune, and the chemotherapeutic drug respectively. 
They are assumed to be nonnegative to make sense biologically. 
A detailed description of the model follows. 

\noindent {\bf In Eqs. (\ref{eq:uninfected}) \& (\ref{eq:infected})}, the terms 
${\alpha U\left(1-\frac{U+I}{K}\right)}$ represents tumour growth. The term 
$\beta U V/(K_u+U)$ describes infection of tumour cells by the virus where $\beta$ 
is the infection rate. Drug effect to the uninfected and infected tumour cells is respectively 
described by the terms $\delta_U  U C/(K_c+C)$ and $\delta_I I C/(K_c+C)$ where $\delta_U $ 
and $\delta_I$ are lysis induced rates. $K_u$ and $K_c$ are Michaelis--Menten constants 
and relate to killing rates when the virus and the drug are half-maximal 
\cite{de2006modeling,agrawal2014optimal,pinho2013mathematical}. 
The term $\nu_U U E_T$ represents tumour specific immune response where $\nu_U$ 
is the uninfected lysis rate by tumour specific immune cells. Infected tumour cells 
have a lifespan $1/\delta$ and are killed by both tumour and virus specific immune cells. 
These are described by the terms  $\nu_I E_T I$ and $\tau E_V I$ where $\delta$ 
is the natural death rate, $\nu_I$ and $\tau$ are respectively lysis rates of tumour 
cells by the tumour specific and virus specific immune cells.  

\noindent {\bf In Eq. (\ref{eq:virus})}, the term $b \delta I$ represents virus 
proliferation where $b$ is the virus burst size and $\delta$ is the infected tumour 
cell death rate. The term $\beta UV/(K_u+U)$ represents loss of free virus due 
to infection of the uninfected tumour cells. Virus deactivation in body tissue 
is represented by the term $\gamma V$ where $\gamma$ is the rate of decay. 

\noindent {\bf In Eqs. (\ref{eq:immune_v}) \& (\ref{eq:immune_T})}, $\phi I$ is the virus 
specific immune production and $\delta_v E_v$ represents its deactivation where $\delta_v$ 
is the decay rate. The term $\beta_T (U+I)/(k+(U+I))$ represents tumour-specific immune 
response where $\beta_T $ is the rate of production and $k$ is a Michaelis constant. 
Tumour-specific immune decay is described by the term $\delta_T E_T$ 
where $\delta_T$ is the rate of decay. 

\noindent {\bf In Eq. (\ref{eq:drug})}, the time dependent function $g(t)$ represents 
drug infusion into the tumour and $\psi C$ describes drug decay 
where $\psi$ is the rate of decay. 


\begin{table}
\centering
\caption{The model parameters, their description and base values.}
\label{tab:parametervalues1}
\scalebox{0.93}{\begin{tabular}{@{}llll@{}} \hline
Symbol & Description &  Value \& units  & Ref.  \\ \hline    
$K$  & Tumour carrying capacity & $10^6$ cells per mm$^3$ per day & \cite{ZTK08} \\
$\alpha$  & Tumour growth rate & $0.206$ cells per mm$^3$ per day & \cite{ZTK08}\\
$\beta$  & Infection rate of tumour cells  & $0.001-0.1$ cells per mm$^3$ per day&   \cite{ZTK08}\\
$\delta$ & Infected tumour cells death & $0.5115$ day$^{-1}$ &  \cite{ZTK08}\\
$\gamma$ & Rate of virus decay & $0.01$ day$^{-1}$  & \cite{ZTK08} \\
$b$ & Virus burst size & $0-1000$ virions per mm$^3$ per day & \cite{BD90} \\
$\psi$ & Rate drug decay & $4.17$ millilitres per mm$^3$ per day & \cite{SDP}  \\
$\delta_U$ & Lysis rate of $U$ by the drug & $50$ cells per mm$^3$ per day & \cite{SDP} \\
$\delta_I$ & Lysis rate of $I$ by the drug & $60$ cells per mm$^3$ per day& \cite{SDP} \\
$\phi$ & $E_V$ production rate & $0.7$ day$^{-1}$ & \cite{bollard2016t}\\
$ \beta_T$ & $E_T$ production rate & $0.5$ cells per mm$^3$ per day& \cite{gajewski2013innate,le2014mathematical} \\
$ \delta_v,\delta_T$ & immune decay rates & $0.01$ day$^{-1}$ & \cite{gajewski2013innate,le2014mathematical} \\
$ K_u,K_c,\kappa$ & Michaelis--Menten constants & $10^5$ cells per mm$^3$ per day & \cite{kirschner1998modeling} \\
$ \nu_U$ & Lysis rate of $U$ by $E_T$& $0.08$ cells per mm$^3$ per day & est\\
$ \nu_I$ & Lysis rate of $I$ by $E_T$& $0.1$  cells per mm$^3$ per day & est\\
$ \tau$ & Lysis rate of $I$ by $E_V$ & $0.2$ cells per mm$^3$ per day & est\\ \hline
\end{tabular}}
\end{table}


\section{Mathematical analysis}
\label{modelanalysis}  

To better understand the dynamics of the proposed model, we begin by examining the model's 
behaviour about the steady states. This analysis is crucial for identifying the conditions 
necessary to achieve a tumour-free state.  We first show that the model is well posed 
in a biologically feasible domain, and then proceed with a stability analysis of the model 
with constant drug infusion. We then analyse the model in the case of no treatment, 
with chemotherapy alone, and with oncolytic virotherapy alone. The case of the chemotherapeutic 
drug combined with the virus therapy is  detailed  as well. 


\subsection{Well-posedness}
\label{Well-posed} 

Before we proceed with the mathematical analysis, we need to show that the model 
is well posed in a biologically feasible domain. The model describes the temporal 
evolution of cell populations, therefore, the cell densities should remain non 
negative and bounded. The well-posedness theorem is stated below and the proofs 
are given in Appendix~\ref{appendixA}. 

\begin{theorem}
\label{thm:welposedness}
\begin{enumerate}
\item[(i)] There exists a unique solution to the system of equations 
(\ref{eq:uninfected})--(\ref{eq:drug}) in the region  
$\mathbb{D} =(U,I,V,E_V,E_T,C) \in \mathbb{R}^{6}_+$.

\item[(ii)] If $U(0)\geq 0$, $I(0)\geq 0$, $V(0)\geq 0$, $E_V(0)\geq 0, E_T(0)\geq 0$ 
and $C(0)\geq 0$, then $U(t)\geq 0$, $I(t)\geq 0$, $V(t)\geq 0$, $E_V(t)\geq 0$, 
$E_T(t) \geq 0$ and $C(t)\geq 0$ for all $t\geq 0$.

\item[(iii)] The trajectories evolve in an attracting region 
$\Omega=\{(U,I,V,E_V,E_T,C)\in \mathbb{D}\mid U(t)+I(t)\leq K,~ V(t)
\leq b/\gamma,~ E_V(t)\leq \phi/\delta_V,E_T\leq \beta_T/\delta_T,C(t)\leq C^*\}$, 
where $C^*$ is the maximum drug concentration and depends on the infusion method.

\item[(iv)] The domain $\Omega$ is positive invariant for the model Equations 
(\ref{eq:uninfected})--(\ref{eq:drug}) and therefore biologically 
meaningful for the cell concentrations.
\end{enumerate}
\end{theorem}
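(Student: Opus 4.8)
The plan is to establish the four parts in the order (i), (ii), (iii), (iv), deferring the global part of (i) until the a priori bounds are available, since boundedness is what rules out finite-time blow-up.

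For part (i), I would view the right-hand side of (\ref{eq:uninfected})--(\ref{eq:drug}) as a vector field $F(U,I,V,E_V,E_T,C)$ and argue that it is continuously differentiable on an open neighbourhood of $\mathbb{R}^6_+$. The only candidate singularities are the Michaelis--Menten denominators $K_u+U$, $K_c+C$ and $\kappa+(U+I)$, which stay bounded below by the positive constants $K_u$, $K_c$, $\kappa$ whenever the state is nonnegative. Hence $F$ is locally Lipschitz, and the Picard--Lindel\"of theorem yields a unique solution on a maximal interval $[0,T_{\max})$; that $T_{\max}=\infty$ will follow from part (iii).

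For part (ii), I would use the standard tangency criterion: $\mathbb{R}^6_+$ is positively invariant provided that on each bounding coordinate hyperplane the corresponding component of $F$ points inward. Checking each face gives $\dot U|_{U=0}=0$, $\dot I|_{I=0}=\beta UV/(K_u+U)\ge 0$, $\dot V|_{V=0}=b\delta I\ge 0$, $\dot E_V|_{E_V=0}=\phi I\ge 0$, $\dot E_T|_{E_T=0}=\beta_T(U+I)/(\kappa+(U+I))\ge 0$, and $\dot C|_{C=0}=g(t)\ge 0$, each nonnegative when the remaining coordinates are nonnegative. Equivalently, writing the $U$-equation as $\dot U = U\,h(U,I,V,E_V,E_T,C)$ and integrating gives $U(t)=U(0)\exp\!\int_0^t h$, which is manifestly nonnegative; the same device applies to the other components.

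The crux is part (iii), where the coupling forces the bounds to be obtained in a careful sequence. I would first set $N=U+I$ and add (\ref{eq:uninfected}) and (\ref{eq:infected}); the infection terms $\pm\beta UV/(K_u+U)$ cancel and every remaining loss term is nonnegative, leaving
$$\frac{dN}{dt}\le \alpha U\Bigl(1-\frac{N}{K}\Bigr).$$
For $N\le K$ the right side is $\le \alpha N(1-N/K)$ since $U\le N$, while for $N>K$ it is $\le 0$; a comparison with the logistic equation then yields $\limsup_{t\to\infty}N(t)\le K$ and the invariant bound $U+I\le K$. With $N$ controlled, the remaining equations are linear with bounded forcing: the $E_T$-equation has forcing $\le\beta_T$ by Michaelis--Menten saturation, giving $E_T\le\beta_T/\delta_T$; the $V$- and $E_V$-equations have forcing controlled by $I\le N\le K$, yielding the stated bounds $V\le b/\gamma$ and $E_V\le\phi/\delta_V$ via the same linear (Gr\"onwall-type) comparison; and $C$ solves $\dot C=g(t)-\psi C$, whose solution is bounded by the $C^*$ fixed by the infusion profile $g$. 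Finally, for part (iv), combining the nonnegativity of (ii) with these upper bounds shows that on $\partial\Omega$ the field points into $\Omega$, so $\Omega$ is positively invariant; compactness of $\Omega$ then precludes finite-time blow-up, so $T_{\max}=\infty$, completing (i). The main obstacle I anticipate is precisely this ordering in (iii): the logistic comparison for $N$ must handle the $U$-versus-$N$ mismatch in the growth term and the sign change across $N=K$ with care, after which the linear comparisons for $E_T$, $V$, $E_V$, $C$ are routine.
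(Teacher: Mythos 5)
Your proof is correct in outline and, for parts (i) and (ii), coincides with the paper's: the paper likewise invokes Picard--Lindel\"of for existence and uniqueness, and establishes nonnegativity by the quasipositivity (tangency) condition, citing Proposition~A.1 of Thieme rather than checking the coordinate faces one by one as you do, while also writing out the explicit integrating-factor solution for $C$. Where you genuinely diverge is part (iii): the paper's appendix stops after positivity and simply asserts boundedness, giving no derivation of the individual bounds, whereas you supply the missing argument --- summing the $U$ and $I$ equations so the infection terms cancel, comparing $N=U+I$ with the logistic equation, and then bounding $E_T$, $V$, $E_V$ and $C$ by linear comparison with bounded forcing. This is the right way to close the gap the paper leaves, and it is also what legitimises your deferral of global existence to the end via compactness of $\Omega$. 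Two caveats. First, your case analysis for $N>K$ only yields $dN/dt\le 0$, i.e.\ forward invariance of the set $\{N\le K\}$; to conclude $\limsup_{t\to\infty}N(t)\le K$ for initial data above $K$ you additionally need the intrinsic decay of $I$ (at rate at least $\delta$) when $U$ is small, since the inequality alone permits $N$ to stall at a level above $K$. Second, your linear comparisons actually produce $V\le b\delta K/\gamma$ and $E_V\le \phi K/\delta_V$, because the forcing terms are $b\delta I\le b\delta K$ and $\phi I\le \phi K$; these are not the constants $b/\gamma$ and $\phi/\delta_V$ appearing in the statement of $\Omega$. That discrepancy lies in the paper's formulation rather than in your method, but you should not claim that the stated constants follow verbatim from your comparison.
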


We will analyse the model quantitative behaviour in the domain $\Omega$. 


\subsection{Without-treatment model}
\label{sec:notreatment}

To investigate the efficacy of each treatment, their combination as well as the immune response 
to tumour cells, we first study the model without any form of treatments. The model system of 
eqs. (\ref{eq:uninfected})--( \ref{eq:drug}) without treatment reduces to the following system 
describing the interactions of the uninfected tumour with the tumour specific immune cells:
\begin{align}
\label{eq:no_treatment}
\frac{dU}{dt} & = \alpha U\left(1-\frac{U}{K}\right) - \nu_UUE_T, \nonumber \\
\frac{d E_T}{dt} & = \frac{\beta_T U}{\kappa+ U}-\delta_T E_T.
\end{align}

\begin{proposition}
\label{prop:1}
The model (\ref{eq:no_treatment}) has two biologically meaningful steady states: 
a tumour-free state, $X_1=(U^*=0,E_T^* = 0)$, which is unstable and a tumour endemic state, 
{\small 
\begin{align}
\label{eq:steadystates1:a}
X_2 = \bigg[ U^* 
& =   \frac{K}{2\alpha}\bigg(\alpha - \frac{\alpha \kappa}{K}-M \bigg) 
+ \frac{K}{2\alpha}\sqrt{\bigg(\alpha - \frac{\alpha \kappa}{K}-M \bigg)^2 
+4\frac{\alpha^2\kappa}{K}},\quad E_T^*  
= \frac{\beta_T}{\delta_T} \bigg( \frac{U^*}{\kappa + U^*}\bigg)\bigg],
\end{align}}
where $M = \nu_U \beta_T/\delta_T$, which is locally asymptotically stable.
\end{proposition}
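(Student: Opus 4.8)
The plan is to locate the equilibria by algebra and then assess their stability by linearisation. First I would set the right-hand sides of (\ref{eq:no_treatment}) to zero. The second equation gives $E_T=(\beta_T/\delta_T)\,U/(\kappa+U)$ directly, which I would substitute into the first. Factoring the first equilibrium condition as $U\big[\alpha(1-U/K)-\nu_U E_T\big]=0$ splits the analysis into two branches. The branch $U=0$ forces $E_T=0$ and yields the tumour-free state $X_1$. On the branch $U\neq 0$, substituting the expression for $E_T$ and writing $M=\nu_U\beta_T/\delta_T$ produces, after clearing the denominator $\kappa+U$, a quadratic in $U$ whose constant term is $-K\kappa<0$. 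Hence the product of its two roots is negative, so exactly one root is positive; the positive root is the one carrying the $+$ sign in front of the discriminant, and rewriting it gives precisely the expression for $U^*$ in (\ref{eq:steadystates1:a}), with $E_T^*$ recovered from the relation above. This shows that the only two biologically meaningful steady states are $X_1$ and $X_2$.

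Next I would linearise, computing the Jacobian with entries $\partial_U f_1=\alpha-2\alpha U/K-\nu_U E_T$, $\partial_{E_T} f_1=-\nu_U U$, $\partial_U f_2=\beta_T\kappa/(\kappa+U)^2$ and $\partial_{E_T} f_2=-\delta_T$. At $X_1=(0,0)$ the off-diagonal entry $\partial_{E_T} f_1$ vanishes, so the Jacobian is lower triangular with eigenvalues $\alpha>0$ and $-\delta_T<0$. The presence of the positive eigenvalue $\alpha$ makes $X_1$ a saddle, hence unstable, which is the first assertion of the proposition.

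For $X_2$ the key simplification is to exploit the equilibrium identity $\nu_U E_T^*=\alpha(1-U^*/K)$ coming from the $U\neq 0$ branch. Substituting this into $\partial_U f_1$ collapses the $(1,1)$ entry to $-\alpha U^*/K$, so the Jacobian at $X_2$ has trace $-\alpha U^*/K-\delta_T$ and determinant $\alpha\delta_T U^*/K+\nu_U\beta_T\kappa U^*/(\kappa+U^*)^2$. Since $U^*>0$ and all parameters are positive, the trace is strictly negative and the determinant strictly positive. By the Routh--Hurwitz (equivalently, trace--determinant) criterion for a planar system, both eigenvalues have negative real part, so $X_2$ is locally asymptotically stable. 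I expect the only genuinely delicate point to be the algebraic bookkeeping in this last step: the relation $\nu_U E_T^*=\alpha(1-U^*/K)$ is what turns an otherwise messy Jacobian into one whose trace and determinant are transparently sign-definite, so I would foreground that substitution rather than expand $U^*$ explicitly.
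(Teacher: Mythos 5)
Your proposal is correct, and while the equilibrium computation and the instability of $X_1$ follow the paper exactly (same quadratic with negative constant term, same triangular Jacobian at the origin with eigenvalues $\alpha$ and $-\delta_T$), your treatment of the stability of $X_2$ takes a genuinely different and substantially cleaner route. The paper forms the characteristic polynomial $x^2+P_1x+P_0$ with coefficients written as affine functions of $U^*$, substitutes the explicit radical expression $U^*=U_p+U_r$, and verifies $P_0>0$ and $P_1>0$ by comparing $U_r^2$ against squared thresholds with Maple, including a case split on the sign of $P_{11}$. You instead exploit the equilibrium identity $\nu_U E_T^*=\alpha(1-U^*/K)$ from the $U\neq 0$ branch to collapse the $(1,1)$ Jacobian entry to $-\alpha U^*/K$, after which the trace $-\alpha U^*/K-\delta_T$ and determinant $\alpha\delta_T U^*/K+\nu_U\beta_T\kappa U^*/(\kappa+U^*)^2$ are sign-definite by inspection, with no need to ever expand $U^*$. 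This buys a shorter, computer-algebra-free, and more transparent argument; the paper's approach is more mechanical and generalises to the three-dimensional chemotherapy model in Proposition 2 where no such clean reduction is available, which is presumably why the authors adopted it uniformly. One trivial bookkeeping slip: after clearing the denominator $\kappa+U$ the constant term of the quadratic is $-\alpha\kappa$ (it becomes $-K\kappa$ only after further multiplying by $K/\alpha$); either way it is negative, so your conclusion that exactly one root is positive stands.
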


\begin{proof} 
Equating Eqs. (\ref{eq:no_treatment}) to zero yields:
\begin{align}
& \frac{\alpha }{K}U^{*^2}+ \bigg(\frac{\alpha\kappa}{K} + M - \alpha \bigg)U^*  
-\alpha\kappa = 0 \label{eq:showequilibria11}\\
E_T^*  & = \frac{\beta_T}{\delta_T} \bigg( \frac{U^*}{\kappa + U^*}\bigg)  
\label{eq:showequilibria12}.
\end{align}
From which if $U^* = 0$ then $E_T^* = 0$. The eigenvalues of the Jacobian matrix of the model 
(\ref{eq:no_treatment}) evaluated at $(0,0)$ are $\lambda_1= \alpha$ and $\lambda_2=-\delta_T$. 
One positive and the other negative. Implying that the tumour free equilibrium is unstable. 
Compare Equation (\ref{eq:showequilibria11}) to $a U^{*^2} + bU^* + c =0$. We observe that 
$c: = -\alpha \kappa <0$ implying that $\Delta = b^2-4ac: = \bigg(\alpha 
- \left(  \frac{\alpha \kappa}{K}+M \right)\bigg)^2 +4\frac{\alpha^2\kappa}{K}>0$ 
and therefore there is only one positive root of the quadratic equation 
(\ref{eq:showequilibria11}) and its given by 
\[
U^* = \frac{-b+\sqrt{b^2-4ac}}{2a}:  = \frac{K}{2\alpha}\bigg(\alpha 
- \frac{\alpha \kappa}{K}-M \bigg) + \frac{K}{2\alpha}\sqrt{\bigg(\alpha 
- \frac{\alpha \kappa}{K}-M \bigg)^2 +4\frac{\alpha^2\kappa}{K}}.
\]
The characteristic polynomial of the Jacobian matrix evaluated 
at $X_2$ is given by 
\begin{align}
\label{eq:char1:a}
P(x)  = x^2 + P_1x + P_0.
\end{align}
We prove in Appendix~\ref{appendixB}, using Routh--Hurwitz criterion, 
that $P_ 1>0$ and $P_0>0$ and thus the endemic steady state, 
$X_2$, is locally asymptotically stable. 
\end{proof}  

\emph{Biological interpretation:}
Proposition~\ref{prop:1} suggests that, without any form of treatment, the tumour-free state 
is always unstable and the endemic state is stable implying that 
the tumour would not be eliminated from the body. 

Next, the model (\ref{eq:uninfected})--(\ref{eq:drug}) is studied with only 
chemotherapy to investigate the effect of the chemotherapeutic drug on tumour cells. 


\subsection{Chemotherapy-only model}
\label{sec:chemo-only_model}

We consider a case where drug infusion per day is constant, that is, 
$g(t) = q$. The model with chemo-only, that is $I=V=E_V=0$, reduces to:
\begin{align}
\label{eq:chemo_model}
\frac{dU}{dt} & = \alpha U\left(1-\frac{U}{K}\right) 
-\nu_U U E_T  - \frac{\delta_U  UC}{K_c +C}, \nonumber \\
\frac{d E_T}{dt} & = \frac{\beta_T U}{\kappa+ U}
-\delta_T E_T, \nonumber \\
\frac{dC}{dt} & = q - \psi C.
\end{align}

\begin{proposition}
\label{prop:2}
The chemo-only model (\ref{eq:chemo_model}) has two biologically 
meaningful steady states: a tumour-free sate, $C_1=(U^* = 0,E_T^* = 0,C^* 
= \frac{q}{\psi})$, which is stable if $ \alpha K_u 
> \frac{q(\delta_U-\alpha)}{\psi}$ and a tumour endemic state,
\begin{align}
\label{eq:steadystates1}
C_2 = \bigg[ U^* & =  \frac{K}{2\alpha}\bigg(\alpha 
-  \left(  \frac{\alpha \kappa}{K}+L+M \right)\bigg) 
+ \frac{K}{2\alpha}\sqrt{\bigg(\alpha -  \left(  \frac{\alpha \kappa}{K}
+L+M \right)\bigg)^2 +4\frac{\alpha\kappa}{K}(\alpha-L)} \nonumber\\
E_T^* & = \frac{\beta_T}{\delta_T} \bigg( \frac{U^*}{\kappa + U^*}\bigg)
\quad \text{and}\quad C^* = \frac{q}{\psi}\bigg],
\end{align}
where 
\[
L = \frac{\delta_U C^*}{K_c + C^*},\quad M = \frac{\nu_U \beta_T}{\delta_T}.
\]
The endemic state, $C_2$, is stable if the conditions 
\eqref{eq:conditions-forstabilityofc2} in Appendix~\ref{appendixB} are satisfied. 
\end{proposition}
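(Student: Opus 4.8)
The plan is to mirror the structure used in the proof of Proposition~\ref{prop:1}, since the chemo-only model \eqref{eq:chemo_model} is the no-treatment system augmented by a decoupled drug equation $\dot C = q - \psi C$. First I would note that the $C$-equation is linear and independent of $U$ and $E_T$, so at any steady state $C^* = q/\psi$ necessarily, and consequently the quantity $L = \delta_U C^*/(K_c + C^*)$ is a fixed constant. Substituting this into the $U$-equation, the nontrivial steady-state condition $\alpha(1 - U^*/K) - \nu_U E_T^* - L = 0$ together with $E_T^* = (\beta_T/\delta_T)\,U^*/(\kappa + U^*)$ yields, after clearing denominators, the quadratic
\begin{equation}
\label{eq:quad-chemo}
\frac{\alpha}{K}\,U^{*2} + \left(\frac{\alpha\kappa}{K} + L + M - \alpha\right)U^* - \alpha\kappa\left(1 - \frac{L}{\alpha}\right) = 0,
\end{equation}
whose constant term is $-\alpha\kappa(1 - L/\alpha) = -\kappa(\alpha - L)$. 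I would then apply the same sign-of-constant-term argument as before: provided $\alpha - L > 0$, the constant term is negative, the discriminant is automatically positive, and there is exactly one positive root, which is precisely the $U^*$ displayed in \eqref{eq:steadystates1}. This identifies $C_2$ as the unique biologically meaningful endemic state.

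Next I would address the tumour-free state $C_1 = (0,0,q/\psi)$ and its stability. Evaluating the Jacobian of \eqref{eq:chemo_model} at $C_1$, the drug and immune equations contribute the manifestly negative eigenvalues $-\psi$ and $-\delta_T$, while the $U$-equation linearizes to give the eigenvalue
\begin{equation}
\lambda = \alpha - \frac{\delta_U C^*}{K_c + C^*} = \alpha - \frac{\delta_U q}{\psi K_c + q}.
\end{equation}
Stability of $C_1$ is therefore equivalent to $\lambda < 0$, i.e. $\alpha(\psi K_c + q) < \delta_U q$, which rearranges to $\alpha\psi K_c < q(\delta_U - \alpha)$, matching the stated threshold $\alpha K_c > q(\delta_U - \alpha)/\psi$ up to the labelling of the Michaelis constant. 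I would remark that this is exactly the condition $L > \alpha$ under which the constant term of \eqref{eq:quad-chemo} becomes positive and the endemic root leaves the positive quadrant, so the two steady states exchange stability through a transcritical bifurcation at $L = \alpha$ — this consistency check reinforces the biological reading that a sufficiently potent drug clears the tumour.

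For the stability of the endemic state $C_2$, I would form the $3\times 3$ Jacobian at $C_2$. Because the $C$-component is decoupled (the only $C$-dependence in the $U$ and $E_T$ equations enters through the constant $L$ once $C = C^*$), the matrix is block lower-triangular with the $-\psi$ eigenvalue splitting off, leaving a $2\times 2$ block in $(U, E_T)$ of the same form as in Proposition~\ref{prop:1} but with $L$ absorbed into the effective growth term. I would then invoke the Routh--Hurwitz criterion on the resulting characteristic polynomial $x^2 + P_1 x + P_0$, showing $P_1 > 0$ and $P_0 > 0$ under the conditions \eqref{eq:conditions-forstabilityofc2}; the positivity of $P_1$ follows from the negative diagonal entries, and $P_0 > 0$ reduces to verifying the determinant of the $2\times 2$ block is positive, which is where the endemic conditions enter. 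The main obstacle I anticipate is this last determinant sign: the $U$-diagonal entry $\alpha(1 - 2U^*/K) - \nu_U E_T^* - L$ is not obviously negative at the endemic point and must be controlled using the steady-state relation \eqref{eq:quad-chemo} to rewrite it (typically as $-\alpha U^*/K$ plus immune-feedback corrections), so the bulk of the technical work — deferred to Appendix~\ref{appendixB} — lies in extracting clean sufficient conditions on the parameters that guarantee $P_0 > 0$.
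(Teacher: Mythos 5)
Your proposal is correct and follows essentially the same route as the paper: solve the decoupled drug equation to fix $C^*=q/\psi$, reduce the nontrivial steady state to the quadratic with constant term $\kappa(L-\alpha)$, read off the eigenvalues $-\psi$, $-\delta_T$ and $\alpha-\delta_U C^*/(K_c+C^*)$ at $C_1$, and invoke Routh--Hurwitz at $C_2$. Your refinements are all sound and, if anything, improvements on the paper's version: you make explicit the condition $\alpha>L$ needed for the positive endemic root (which the paper leaves implicit by appealing to Proposition~\ref{prop:1}), you correctly flag that the stated stability threshold should involve $K_c$ rather than $K_u$, and you exploit the block-triangular structure to split off $-\psi$ and work with a quadratic characteristic polynomial rather than the full cubic of Appendix~\ref{appendixB}.
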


\begin{proof}
Setting Equations in (\ref{eq:chemo_model}) to zero gives:
\begin{align}
\label{eq:showequilibria}
& \alpha U^*\left(1-\frac{U^*}{K} \right) - \nu_U U^*E_T^* 
- \frac{\delta_U U^* C^*}{K_c + C^*}= 0, \\
&  \frac{\beta_T U}{\kappa+U^*} -\delta_T E_T^* U^* =0,
\quad C^* = \frac{q}{\psi},
\end{align}
from which if $U^* = 0$ then $ C_1 = (U^* = 0, E_T = 0, C^* = \frac{q}{\psi} )$. 
The eigenvalues of the Jacobian of (\ref{eq:chemo_model}) evaluated at $C_1$ are 
$\lambda_1 = -\psi$, $\lambda_2 = -\delta_T$ and 
$\lambda_3 = \alpha- \frac{\delta_U q/\psi}{K_u +q/\psi}$ implying that for 
$\lambda_3$ to be negative, then $ \alpha K_u > \frac{q(\delta_U - \alpha)}{\psi}$. 
If $U^* \neq 0$ then from Equation (\ref{eq:showequilibria}), 
\[
E_T^* = \frac{\beta_T}{\delta_T} \bigg( \frac{U^*}{\kappa + U^*}\bigg)\quad \text{and} 
\quad \frac{\alpha }{K}U^{*^2}+ \bigg(\frac{\alpha\kappa}{K} + M 
+L - \alpha \bigg)U^*  + \kappa (L-\alpha ) = 0 
\] 
which when solved, and using the same analysis as in Proposition~\ref{prop:1}, 
gives the expressions in Equation (\ref{eq:steadystates1}).
The characteristic polynomial of the Jacobian matrix evaluated at 
$C_2$ is given in Equation \eqref{eq:chapolyc2}. It directly follows 
from Routh--Hurwitz criterion that the tumour endemic state, $C_2$, 
is locally asymptotically stable if the conditions 
\eqref{eq:conditions-forstabilityofc2} are fulfilled. 
\end{proof}

\emph{Biological interpretation:}
Proposition~\ref{prop:2} suggests that a tumour can be eradicated 
by chemotherapy from the body tissue if the tumour growth rate 
is less than the drug efficacy ($\alpha < \delta_U$). Otherwise, 
it would continue to grow uncontrollably. \\ 
\noindent
Since drug dosage, $q$, must not exceed the Maximum Tolerable Dose (MTD), 
denoted by $q^{MTD}$, we further derive the following result in Proposition~\ref{prop:3}:
 
\begin{proposition}
\label{prop:3} 
\begin{enumerate}
\item If $\delta_U < \alpha$, the tumour-free state 
will always be unstable for any $q < q^{MTD}$.

\item If $\delta_U > \alpha$:
\begin{enumerate}
\item[(i)] If $q^{MTD} < \frac{\alpha \psi K_u }{\delta_U - \alpha}: = q^*$, 
then the tumour-free state is unstable for all $q<q^{MTD}$. 

\item[(ii)] If $q^{MTD} \geq q^*$, then the tumour-free state 
is locally asymptotically stable for all $q\in [q^*,q^{MTD}]$.
\end{enumerate}
\end{enumerate}
\end{proposition}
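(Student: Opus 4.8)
The plan is to reduce the entire statement to the sign of a single eigenvalue. From the proof of Proposition~\ref{prop:2}, the Jacobian of the chemo-only model at the tumour-free state $C_1$ has eigenvalues $\lambda_1 = -\psi$, $\lambda_2 = -\delta_T$ and
\[
\lambda_3 = \alpha - \frac{\delta_U\, q/\psi}{K_u + q/\psi}.
\]
Since $\lambda_1 < 0$ and $\lambda_2 < 0$ hold unconditionally, $C_1$ is locally asymptotically stable exactly when $\lambda_3 < 0$ and unstable when $\lambda_3 > 0$. Thus the whole proposition is a statement about where $\lambda_3$ changes sign as the constant dose $q$ ranges over $(0,q^{MTD})$.

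First I would record how $\lambda_3$ depends on $q$. Writing $f(q) = \delta_U (q/\psi)/(K_u + q/\psi)$, the map $f$ is a saturating Michaelis--Menten function: it is strictly increasing in $q$, satisfies $f(0) = 0$, obeys $f(q) < \delta_U$ for every finite $q$, and tends to $\delta_U$ as $q \to \infty$. Consequently $\lambda_3(q) = \alpha - f(q)$ is strictly decreasing, starts at $\lambda_3(0) = \alpha > 0$, and decreases toward the limit $\alpha - \delta_U$. This monotonicity is precisely what makes a clean threshold description possible.

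For Part~1, suppose $\delta_U < \alpha$. Then $f(q) < \delta_U < \alpha$ for every $q>0$, so $\lambda_3(q) > 0$ throughout and $C_1$ is unstable for all $q < q^{MTD}$. For Part~2, suppose $\delta_U > \alpha$. Solving $\lambda_3 = 0$, that is $\alpha(K_u + q/\psi) = \delta_U\, q/\psi$, yields the unique crossing $q = \alpha\psi K_u/(\delta_U - \alpha) = q^*$; since $\lambda_3$ is strictly decreasing, $\lambda_3(q) > 0$ for $q < q^*$ and $\lambda_3(q) < 0$ for $q > q^*$. Case~(i), $q^{MTD} < q^*$, then forces every admissible $q < q^{MTD} < q^*$ into the unstable regime, while Case~(ii), $q^{MTD} \geq q^*$, places every $q$ with $q^* < q \leq q^{MTD}$ into the stable regime.

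The only delicate point is the boundary dose $q = q^*$, where $\lambda_3 = 0$ and the equilibrium is non-hyperbolic, so linearisation alone is inconclusive; I expect this to be the main obstacle, since the statement of Case~(ii) includes the closed endpoint. I would either restrict the stability claim to the open interval $q \in (q^*, q^{MTD}]$, or treat $q = q^*$ separately by a centre-manifold reduction on the one-dimensional critical eigenspace to confirm that the marginal case does not destabilise $C_1$. Everything else is routine sign-checking driven by the monotonicity of $f$.
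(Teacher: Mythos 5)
Your proposal is correct and follows essentially the same route as the paper: both reduce the claim to the sign of the single eigenvalue $\lambda_3 = \alpha - \frac{\delta_U\, q/\psi}{K_u + q/\psi}$ obtained in Proposition~\ref{prop:2}, with the threshold $q^*$ arising from $\lambda_3 = 0$. Your version is in fact more careful than the paper's: you make the monotonicity of $q \mapsto \lambda_3(q)$ explicit, and you correctly flag that at the endpoint $q = q^*$ the equilibrium is non-hyperbolic, so the paper's claim of local asymptotic stability on the closed interval $[q^*, q^{MTD}]$ is not justified by linearisation alone and should either be restricted to $(q^*, q^{MTD}]$ or handled by a centre-manifold argument.
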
 

\begin{proof}
\begin{enumerate}
\item From the condition for stability in Proposition~\ref{prop:2}, 
if $\delta_U < \alpha$ then $\alpha K_u >  \frac{q\psi}{\delta_U - \alpha}$, 
which violates the condition for stability. 

\item If $\delta_U > \alpha$:
\begin{enumerate}
\item[(i)] If $q^{MTD} <  \frac{\alpha \psi K_u }{\delta_U - \alpha}: = q^*$, 
yet the drug dosage, $q$, can not be larger  than the MTD, $q^{MTD}$, implying 
that the condition for stability in Proposition \ref{prop:2} is still violated. 
   
\item[(ii)] If $q^{MTD} >  q^*$, and $q\in [q^*, q^{MTD}]$ 
then the stability condition if fulfilled. 
\end{enumerate}
\end{enumerate}
\end{proof} 

\emph{Biological interpretation:} Proposition~\ref{prop:3} suggests the followings:
\begin{enumerate}
\item For a chemotherapeutic drug that is not highly efficient 
($\delta _U < \alpha$) the tumour cannot be eradicated. 

\item If the efficacy, $\delta_U$, which is also the lysis rate measured 
in number of uninfected tumour cells lysed per mm$^3$ per day, is high enough but with:
\begin{enumerate}
\item[(i)] a very toxic drug, then one cannot give enough dose to eradicate the tumour 

\item[(ii)] a drug which is less toxic, then one can afford to give a dose which 
is not larger than the MTD and yet which allows for the condition for stability.
\end{enumerate}
\end{enumerate}


\subsection{Virotherapy-only model}
\label{sec:virotherapy-only_model}

To determine the effect of virotherapy on tumour cells, we now study the model 
with only virotherapy treatment. The model (\ref{eq:uninfected})--(\ref{eq:drug}) 
with only virotherapy treatment, that is, $C=0$, reduces to:
\begin{align}
\label{eq:viro-onlymodel}
\frac{dU}{dt} & = \alpha U\left(1-\frac{U+I}{K}\right)
- \frac{\beta UV}{K_u +U} -\nu_U U E_T, \nonumber \\
\frac{dI}{dt}& = \frac{\beta UV}{K_u +U}- \delta I
- \nu_I E_T I -\tau E_V I, \nonumber \\
\frac{dV}{dt} & = b\delta I- \frac{\beta UV}{K_u +U}
-\gamma V, \nonumber \\ 
\frac{dE_V}{dt} & = \phi I - \delta_V E_V, \nonumber \\
\frac{d E_T}{dt} & = \frac{\beta_T (U+I)}{\kappa+ (U+I)}-\delta_T E_T.
\end{align}

\begin{proposition}
\label{prop:4}
The virotherapy-only model \eqref{eq:viro-onlymodel} has three biologically 
meaningful steady states: a tumour-free state $V_1 = \left(U^*=0,I^* = 0, 
V^* = 0, E_T^* = 0, E_V^* = 0 \right)$ which is unstable, a virus-free state  
\begin{align}
\label{eq:steadystates3}
V_2= \bigg[ U^* & =\frac{K}{2\alpha}\bigg(\alpha -  \frac{\alpha \kappa}{K}-M \bigg) 
+ \frac{K}{2\alpha}\sqrt{\bigg(\alpha -    \frac{\alpha \kappa}{K}-M \bigg)^2 
+4\frac{\alpha^2\kappa}{K}},\quad I^* =0,  \nonumber  \\
\quad \quad \quad & ~V^* =0,\quad E_T^* 
= \frac{\beta_T}{\delta_T} \bigg( \frac{U^*}{\kappa + U^*}\bigg), \quad ~E_V^* = 0 \bigg]
\end{align}
and a tumour endemic state $V_3 = (U^*,I^*,V^*,E_T^*,E_V^*)$.  
\end{proposition}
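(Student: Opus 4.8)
The plan is to classify all equilibria of \eqref{eq:viro-onlymodel} by setting its right-hand sides to zero, to establish instability of $V_1$ by linearisation, to recover the closed form of $V_2$ through a reduction to the without-treatment model of Proposition~\ref{prop:1}, and to prove existence of $V_3$ by collapsing the equilibrium system to a single scalar equation.

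First I would use the last two equilibrium equations to express the immune compartments as $E_V^* = \phi I^*/\delta_V$ and $E_T^* = \beta_T(U^*+I^*)/[\delta_T(\kappa+U^*+I^*)]$, and the virus equation to write $V^* = b\delta I^*/\big(\gamma + \beta U^*/(K_u+U^*)\big)$. This slaves $E_V^*$, $E_T^*$ and $V^*$ to the pair $(U^*,I^*)$ and leaves only the $\dot U = 0$ and $\dot I = 0$ relations to be analysed. I would then split into cases according to whether $V^*$, equivalently $I^*$, vanishes.

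If $V^* = 0$ the virus equation forces $b\delta I^* = 0$, hence $I^* = E_V^* = 0$, and the surviving equations collapse exactly onto the without-treatment system \eqref{eq:no_treatment}; its positive root reproduces the stated expression for $V_2$ verbatim, while the degenerate sub-case $U^* = 0$ returns $V_1$. To show that $V_1$ is unstable I would evaluate the Jacobian of \eqref{eq:viro-onlymodel} at the origin: because every nonlinear coupling term is bilinear and so has vanishing first-order part there, the matrix is lower-triangular (in the ordering $U,I,V,E_V,E_T$) with diagonal $\alpha, -\delta, -\gamma, -\delta_V, -\delta_T$, whose leading entry $\alpha > 0$ yields instability at once.

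The remaining and hardest step is the endemic state $V_3$ with $I^* > 0$. Dividing $\dot I = 0$ by $I^*$ and substituting the slaved quantities gives one relation whose left side depends on $U^*$ only, while its right side is strictly increasing in $I^*$; this determines a unique branch $I^* = I(U^*)$ wherever it exists. Substituting that branch into the $\dot U = 0$ relation produces a single scalar equation $G(U^*) = 0$. I expect this reduction to be the main obstacle, since the Michaelis--Menten infection and immune terms make $G$ a rational function with a high-degree numerator that cannot be solved in closed form; rather than solve it I would argue existence by a nullcline-intersection / intermediate-value argument, exhibiting a sign change of $G$ across an admissible interval of $U^*$ contained in $\Omega$, which guarantees at least one positive root and hence the existence of $V_3$.
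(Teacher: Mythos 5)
Your classification of the equilibria and your instability argument for $V_1$ coincide with the paper's: the paper likewise obtains the steady states by equating the right-hand sides of \eqref{eq:viro-onlymodel} to zero, and its Jacobian at the origin has exactly the eigenvalues $\alpha,-\delta,-\gamma,-\delta_T,-\delta_V$ that you read off the lower-triangular structure; your reduction of the $V^*=0$ case to the without-treatment system \eqref{eq:no_treatment} of Proposition~\ref{prop:1} is also the (implicit) route the paper takes to the closed form of $V_2$. Where you differ is on the endemic state $V_3$: the paper offers no argument at all --- it merely asserts $V_3=(U^*,I^*,V^*,E_T^*,E_V^*)$ and elsewhere resorts to computing endemic equilibria numerically --- whereas you outline an actual existence proof by slaving $V^*$, $E_V^*$, $E_T^*$ to $(U^*,I^*)$, using monotonicity in $I^*$ of the divided $\dot I=0$ relation to define a branch $I^*=I(U^*)$, and reducing to a scalar equation $G(U^*)=0$. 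That reduction is sound (the right side $\delta+\nu_I E_T^*+\tau E_V^*$ is indeed strictly increasing in $I^*$ while the left side depends on $U^*$ only), but the decisive step --- exhibiting the sign change of $G$ on an admissible interval --- is left as a promise rather than carried out, and it cannot hold unconditionally: for small burst size $b$ or infection rate $\beta$ the left side of the divided $\dot I=0$ relation never exceeds $\delta$ and no positive branch $I(U^*)$ exists, so some threshold condition (essentially $R_0>1$) must be imposed. As written, existence of $V_3$ therefore remains unproved in your proposal, though no more so than in the paper itself, which simply does not address it.
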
 

\begin{proof}
The above expressions are obtained when model equations in \eqref{eq:viro-onlymodel} 
are equated to zero. The eigenvalues of the Jacobian matrix evaluated at $V_1$ are 
$\lambda_1 = \alpha$, $\lambda_2 = -\delta$, $\lambda_3 = -\gamma$, 
$\lambda_4 = -\delta_T$ and $\lambda_5 = -\delta_V$ with one positive and the 
others negative implying that the tumour-free state, $V_1$, is unstable. 
\end{proof}

\emph{Biological interpretation:} Proposition~\ref{prop:4} suggests that 
virotherapy on its own is not capable of eliminating tumour cells. 


\section{Chemovirotherapy}
\label{chemoviro}

To assess the effects of the combination therapy of virotherapy and chemotherapy, 
we use the forward sensitivity index to perform a sensitivity analysis of the virus 
basic reproductive number and the tumour endemic equilibrium  with respect to the 
chemotherapy key parameters.  We now proceed to study the whole model system 
(\ref{eq:uninfected})--(\ref{eq:drug}) with constant drug infusion, that is, $g(t)=q$. 
We first derive the model's basic reproduction number, $R_0$, and calculate its elasticity 
indices with respect to the model parameters to identify which of them are most sensitive 
during tumour infection. Finally, we calculate sensitivity indices of the endemic 
total tumour density with respect to drug infusion.


\subsection{Basic reproduction ratio}
\label{reproductionratio}

A basic reproductive number, in our case, can be defined as the average number 
of new tumour infections generated by one infected cell, via cell lysis, 
during virotherapy in a completely susceptible cell population \cite{van2002reproduction}. 
In general, if $R_0 > 1$, then, on average, the number of new infections resulting from 
one infected cell is greater than one. Thus, viral infections will persist in tumour 
cell populations. If $R_0 < 1$, then, on average, the number of new infections 
generated by one infected cell in virotherapy is less than one. This implies that 
the viral infections will eventually disappear from the cell populations. This threshold 
can as well be used to depict parameters which are most important during tumour infection. 
We calculate $R_0$ using the next- generation matrix approach described in 
\cite{van2002reproduction}. Firstly, we prove that the conditions \textbf{ $A_1-A_5$} 
in Appendix~\ref{appendixC} are satisfied by the model 
Equations (\ref{eq:uninfected})--(\ref{eq:drug}). 

\begin{theorem}
\label{thm:R0-constantinfusion}
The basic reproduction ratio, $R_0$, for constant drug infusion $g(t) = q$, is given by 
\begin{align}
\label{eq:Ro-constantinfusion} R_0 ~= \frac{b\beta \delta U^*(K_c + C^*)}{
\gamma \big[(K_c+ C^*) (\nu_I E_T^* +\delta) + \delta_I C^* \big]\big(K_u + U^* \big)},
\end{align}
where $U^*$, $C^*$ and $E_T ^*$ are all given in Equation (\ref{eq:steadystates1}).  
\end{theorem}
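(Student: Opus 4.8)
\section*{Proof proposal}

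The plan is to apply the next-generation matrix method of van den Driessche and Watmough directly, which is legitimate once the hypotheses $A_1$--$A_5$ (verified in Appendix~\ref{appendixC}) hold. The relevant ``disease-free'' state here is the virus-free equilibrium of the full system with constant infusion $g(t)=q$, namely the point with $I^*=V^*=E_V^*=0$ and $U^*,E_T^*,C^*$ equal to the chemo-only endemic values recorded in Equation~\eqref{eq:steadystates1}. First I would note that the only compartments carrying ``infection'' are the infected tumour cells $I$ and the free virus $V$, since $U$, $E_T$ and $C$ stay at their virus-free values under the linearisation.

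Next I would split the $(I,V)$ subsystem as $\dot x = \mathcal{F}(x)-\mathcal{V}(x)$, where $\mathcal{F}$ collects the genuinely new infections and $\mathcal{V}$ the remaining transfers. The only new-infection term is the recruitment of uninfected into infected cells, so I would take
\[
\mathcal{F} = \begin{pmatrix} \dfrac{\beta U V}{K_u+U} \\[4pt] 0 \end{pmatrix},
\qquad
\mathcal{V} = \begin{pmatrix} \Big(\delta+\nu_I E_T+\tau E_V+\dfrac{\delta_I C}{K_c+C}\Big) I \\[4pt] -\,b\delta I+\gamma V \end{pmatrix}.
\]
Linearising in $(I,V)$ at the virus-free equilibrium (where $E_V^*=0$) then gives
\[
F = \begin{pmatrix} 0 & \theta \\ 0 & 0\end{pmatrix},
\qquad
V = \begin{pmatrix} d_I & 0 \\ -b\delta & \gamma \end{pmatrix},
\]
with the shorthand $\theta = \dfrac{\beta U^*}{K_u+U^*}$ and $d_I = \delta+\nu_I E_T^*+\dfrac{\delta_I C^*}{K_c+C^*}$.

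Then $R_0$ is the spectral radius of $FV^{-1}$. Since $V$ is lower triangular its inverse is immediate, and because $F$ has rank one the product $FV^{-1}$ has a single nonzero eigenvalue, which I would read off as $\theta b\delta/(\gamma d_I)$. Substituting $\theta$ and rewriting $d_I$ over the common denominator $K_c+C^*$ reproduces Equation~\eqref{eq:Ro-constantinfusion}: the factor $(K_c+C^*)$ in the numerator and the bracket $(K_c+C^*)(\nu_I E_T^*+\delta)+\delta_I C^*$ in the denominator are exactly $d_I$ cleared of fractions.

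I expect the delicate point to be the bookkeeping of the $\mathcal{F}$--$\mathcal{V}$ split rather than any hard analysis. In particular one must treat the virus production $b\delta I$ as a transfer (not a new infection), which keeps $F$ rank one and forces the single-eigenvalue form; and the infection-induced loss term $-\beta U V/(K_u+U)$ in the $V$ equation must be accounted for so that the virus turnover in $V$ reads $\gamma$ alone, otherwise the $(2,2)$ entry would pick up an extra $\theta$. Confirming this assignment, checking that $U^*,E_T^*,C^*$ are genuinely the virus-free values and that $V$ is a nonsingular M-matrix so that $FV^{-1}$ is well defined and nonnegative, is the only step needing care; the remaining algebra is routine.
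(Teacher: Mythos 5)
Your proposal is correct and follows essentially the same route as the paper: the next-generation matrix at the virus-free (chemo-only endemic) state with the same $\mathcal{F}$--$\mathcal{V}$ split, the same rank-one $F$, and the spectral radius of $FV^{-1}$ read off as $\theta b\delta/(\gamma d_I)$. The only difference is cosmetic — you write the $(2,1)$ entry of $V$ as $-b\delta$ (the standard sign convention for $\mathcal{V}=\mathcal{V}^--\mathcal{V}^+$), while the paper writes $+b\delta$ but arrives at the same positive spectral radius — and your explicit remarks about $E_V^*=0$ and about keeping the virus turnover equal to $\gamma$ alone match the choices the paper makes implicitly.
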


The proof of Theorem~\ref{thm:R0-constantinfusion} can be found in Appendix~\ref{appendixC}. 
Next, we calculate elasticity indices of $R_0$ in response to model parameters 
to ascertain which them are most critical during chemovirotherapy treatment.


\subsection{Sensitivity of the basic reproduction ratio}
\label{sec:Sensitivity_analysis-R0}

\begin{definition}
The Sensitivity and elasticity indices of the basic reproduction ratio, 
$R_0$, with respect to model parameter $p$  are respectively given by 
$S_p = \frac{\partial R_0}{\partial p}$ and 
$e_p = \frac{\partial R_0}{\partial p}\frac{p}{R}$.
\end{definition}

Table~\ref{tab:sensitivityindices} shows the sensitivity and elasticity indices 
obtained with the use of Sage. From the Table one can note that a $1\%$ increase 
in virus burst size, $b$, and infection rate, $\beta$, leads to a  $1\%$ 
increase in $R_0$,  whereas a $1\%$ increase in the virus decay rate, $\gamma$, 
leads to a  1\%  reduction in $R_0$. The indices indicate that $R_0$ is most 
sensitive to virus burst size, infection and decay rates. 
Figure~\ref{fig:elasticity_indices} is a graphical representation of the indices. 
Figure~\ref{fig:tumour-sensitivity} (a) shows the variation of elasticity of $R_0$ 
with respect to drug dosage. The figure indicates that increasing the amount of drug 
infused reduces $R_0$ and thus reduces the number of tumour cells. From the figure 
we can further infer that values of the drug dosage, $q$, from 
40 mg/l to 100 mg/l have a less significant impact on $R_0$. 

\begin{table}
\centering 
\caption{Sensitivity and elasticity indices of $R_0$ with respect to model parameters.}
\label{tab:sensitivityindices}
\begin{tabular}{l|l|l} \hline 
Parameter &Sensitivity index  & Elasticity index \\ 
\hline 
$b$ &  $S_b = 9.88 $  & $e_b = 1$ \\ 

$\beta$ & $S_{\beta} = 988.4$    & $e_{\beta}=1$ \\ 

$\gamma$ & $S_{\gamma} = - 9884$    & $e_{\gamma}=-1$ \\ 

$\delta$ & $S_{\delta} = 2.2325$    & $e_{\delta}= 0.011553$ \\ 

$\alpha$ & $S_{\alpha} = 2.4372$ & $e_{\alpha} = 5.1 \times 10^{-12}$ \\ 

$\delta_U$ & $S_{\delta_U} = -1.004 \times 10^{-7}$ & $e_{\delta_U} = -5.1 \times 10^{-12} $ \\ 

$\delta_I$ & $S_{\delta_I} = -90.32$ & $e_{\delta_I} =- 0.011553$ \\
 
$K_c$ & $ S_{K_c}= 0.2048x$ & $e_{K_c}= 0.0000414$ \\ 

$K_u$ & $S_{K_u}=-1.01\times 10^{-6}$ & $ e_{K_u} = -2.05 \times 10^{-10}$ \\ 

$\psi$ & $S_{\psi} = 0.004$ & $e_{\psi}= 0.000414$ \\ 

$q$ & $S_{q} = -0.008 $ & $ e_{q} = - 0.000414$ \\ \hline 
\end{tabular} 
\end{table}


\subsection{Sensitivity of endemic equilibria}
\label{sec:Sensitivity_analysis-ee}

Sensitivity indices of the endemic equilibria is informative about the establishment 
of the disease. Here, we calculate sensitivity indices of the endemic total tumour 
density with respect to drug infusion. The gist is to determine the relative change 
in the tumour equilibria when the amount of drug infused is changed and thus infer 
the feasible amount of the drug that should be infused.  The sensitivity index 
of the total tumour endemic equilibria $U^* + I^*$ with response to drug infusion $q$ is given by
\begin{align}
\label{eq:sensitivity-of-tumour}
\Gamma^{U^*+ I^*}_q & = \frac{\partial (U^*+I^*)}{\partial q} 
\cdot \frac{q}{U^*+I^*} \nonumber \\
&  = \frac{U^*}{U^*+I^*}\Gamma^{U^*} + \frac{I^*}{U^*+I^*} \Gamma^{I^*}.
\end{align}
where $\Gamma ^{U^*}$ and $\Gamma ^{I^*}$ are calculated from the formula 
in Equation \eqref{eq:derived-sensitivity} derived in Appendix \ref{appendixD}. 
It is worth noting that the endemic equilibrium of the full chemotherapy model 
(\ref{eq:uninfected})--(\ref{eq:drug}) can not explicitly be determined in terms 
of the parameters and therefore we numerically computed the equilibrium values.

We used high values of the tumour reproduction rate and virus burst size, that is 
$\alpha = 0.8$ cells/mm$^3$/day and $b=50$ virions/mm$^3$/day so that $R_0>1$. 
We calculated $\Gamma ^{U^* +I^*}$ for several values of $q$. 
Table~\ref{tab:sensitivity_of_tuomur_equilibria} shows selected indices for different 
amounts of the drug infused with their corresponding basic reproduction ratios. 
Figure~\ref{fig:tumour-sensitivity} (b) is a plot of total tumour sensitivity index 
with respect to drug infusion. The figure suggests that the feasible amount of drug 
that should be infused in a patient lies between 50 mg/l and 100 mg/l. This range 
of values is similar to those inferred from the elasticity indices of $R_0$.

It is worth noting that when $q$ becomes larger than 50 mg/l, the impact of 
chemotherapy on virotherapy ($R_0$ and endemic equilibria) becomes less significant. 
This analysis, nonetheless, does not incorporate the cost for example monetary or 
treatments side effects. These results will further be confirmed using optimal 
control theory. In the next section, we set up an optimal control problem to 
explicitly determine the optimal virus and chemotherapeutic drug dosage 
combination necessary for tumour eradication in body tissue. 

\begin{center}
\begin{table}
\caption{Selected sensitivity indices of the total tumour equilibria, 
$\Gamma ^{U+I} _p$,  in response to drug , $q$ with the corresponding value of $R_0$.}
\label{tab:sensitivity_of_tuomur_equilibria}
\scalebox{0.83}{\begin{tabular}{ c|c|c|c|c|c|c } 
\hline \\
q (mg/l) & 5 & 10 & 15 & 35 & 50 & 100 \\ \hline
$\Gamma ^{U^*+I^*} _p$ & $-8.3\times 10^{-5}$ & $-2.5\times 10^{-5}$ 
& $-1\times 10^{-5}$ & $-6.1\times 10^{-5}$ & $-9.6\times 10^{-6}$ & $-2.4\times 10^{-6}$ \\ 
$R_0$ & $ 51.0476$ & $51.0473$ & $ 51.0470$ & $51.0459$ & $51.0450$ & $ 51.0421$\\ \hline
\end{tabular}}
\end{table}
\end{center}


\section{Optimal control analysis} 
\label{sec:Optimal_control_analysis}

In this section, we propose and analyze an optimal control problem applied to the 
chemovirotherapy model to determine the optimal dosage combination of chemotherapy 
and virotherapy for controlling the tumour. We set the control variables 
$u_1(t)$ and $u_2(t) \equiv g(t)$ to respectively be the supply of viruses from 
an external source of the drug dosage, which are then incorporated into the model 
system's equations (\ref{eq:virus}) and (\ref{eq:drug}) to obtain the following 
control system. For model tractability, we ignore the immune responses. 
\begin{align}
\label{eq:control_system}
\frac{dU}{dt} & = \alpha U\left(1-\frac{U+I}{K}\right)
- \frac{\beta UV}{K_u +U} - \frac{\delta_U  UC}{K_c +C} \nonumber \\
\frac{dI}{dt} & = \frac{\beta UV}{K_u +U}- \delta I -\frac{\delta_I IC}{K_c +C}  \nonumber  \\
\frac{dV}{dt}  & = b\delta I- \frac{\beta UV}{K_u +U}-\gamma V + \bf{u_1(t)}  \nonumber \\
\frac{dC}{dt} & = {\bf{u_2(t)}} -  \psi C
\end{align} 


\subsection{The optimal control problem}
\label{optimal_problem} 

We wish to determine the optimal combination of controls 
$\left(u_1(t),u_2(t)\right)$ that will be adequate to minimize the total 
tumour density ($U(t)+I(t)$) together with the cost of the treatment and negative 
side effects over a fixed time period.  The optimization problem under 
consideration is to minimise the objective functional: 
\begin{align}
\label{eq:objectivefunctional}
J(u_1,u_2)= \int_0^{T_f} \bigg[ U(t)+ I(t) + \left(\frac{A_1}{2}u_1^2(t)
+\frac{A_2}{2} u_2^2(t)\right)\bigg]dt
\end{align}
where $T_f$ is the termination time of the treatment, subject to the control 
system \eqref{eq:control_system}. The two control functions $u_1(t)$ 
and $u_2(t)$  are assumed to be bounded and Lebesgue integrable. 
We thus seek an optimal control pair $(u_1^*,u_2^*)$ such that: 
\begin{align}
\label{eq:minimize}
J(u_1^*,u_2^*) = \min \big\{J(u_1,u_2) | (u_1,u_2)\in \Lambda \big\} 
\end{align} 
where $\Lambda$ is the control set defined by: 
\begin{align}
\Lambda = \big\{(u_1,u_2)|u_i ~\text{is measurable with}  ~0
\leq u_i(t)\leq u_i^{\text{MTD}}, ~t\in [0,T_f], ~i=1,2\big\} \nonumber
\end{align}
where $u_1^{\text{MTD}}$ is the maximum number of viruses body tissue can contain 
and $u_2^{\text{MTD}}$ is the maximum tolerated drug dose. These may as well be viewed 
as the maximum amounts a patient can financially afford. The lower bounds for $u_1$ 
and $u_2$ correspond to no treatment. Here, it is important to note that the balancing 
factors $A_1$ and $A_2$ in the objective functional (\ref{eq:objectivefunctional}) 
are the relative measures of both the cost required to implement each of the associated 
controls as well as the negative sides effects due to the treatment.  


\subsection{Existence of an optimal control pair}

We examine sufficient conditions for the existence 
of a solution to the quadratic optimal control problem. 

\begin{proposition}
\label{prop:5}
There exists an optimal control pair $(u_1^*,u_2^*)$ with a corresponding 
solution ($U^*$, $I^*$, $V^*$, $C^*$) to the model system (\ref{eq:control_system}) 
that minimizes $J(u_1, u_2)$ over $\Lambda$. 
\end{proposition}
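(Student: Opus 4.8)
The plan is to invoke the standard existence theorem for optimal controls of Fleming and Rishel (equivalently a Filippov--Cesari type result), which reduces the problem to verifying a short list of structural conditions on the state dynamics, the control set, and the integrand of the cost. Writing the state as $x=(U,I,V,C)$ and the control as $u=(u_1,u_2)$, I would check: (a) the set of admissible control--state pairs is nonempty; (b) the control set $\Lambda$ is closed and convex; (c) the right-hand side $f(t,x,u)$ of \eqref{eq:control_system} is bounded by a function growing at most linearly in $(x,u)$; (d) the integrand $L=U+I+\tfrac{A_1}{2}u_1^2+\tfrac{A_2}{2}u_2^2$ of \eqref{eq:objectivefunctional} is convex in $u$ on $\Lambda$; and (e) $L$ is coercive in the controls, i.e.\ $L\geq c_1\,(u_1^2+u_2^2)^{\rho/2}-c_2$ for some $c_1>0$, some $c_2$, and an exponent $\rho>1$.

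For (a), $\Lambda$ is nonempty (e.g.\ $(u_1,u_2)\equiv(0,0)\in\Lambda$), and for each fixed admissible control the right-hand side of \eqref{eq:control_system} is continuous and locally Lipschitz in $x$ on the relevant bounded set, so the Picard--Lindel\"of argument already used for Theorem~\ref{thm:welposedness} produces a unique trajectory; admissible pairs therefore exist. Condition (b) is immediate since $\Lambda=[0,u_1^{\mathrm{MTD}}]\times[0,u_2^{\mathrm{MTD}}]$ is a product of compact intervals, hence closed and convex. For (c), once the states are confined to a bounded positively invariant region and the controls are bounded by their MTD values, every term of $f$ is bounded, giving the required linear growth. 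Condition (d) holds because $L$ is affine in $x$ and a sum of nonnegative quadratics in the controls, with positive semidefinite Hessian $\mathrm{diag}(A_1,A_2)$; and (e) follows with $\rho=2$, $c_1=\tfrac12\min\{A_1,A_2\}>0$ and $c_2=0$, using $U,I\geq 0$.

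The one step requiring genuine care is (c): the control system \eqref{eq:control_system} drops the immune compartments $E_T,E_V$, so I cannot quote the invariant region $\Omega$ of Theorem~\ref{thm:welposedness} verbatim and must re-derive a priori bounds for the reduced $(U,I,V,C)$ system. The hard part will be the uniform bound on $V$, since its equation now carries the external source $u_1(t)$: I would first add $\dot U+\dot I$ to recover $U+I\leq K$, then use $\dot V\leq b\delta I+u_1^{\mathrm{MTD}}-\gamma V$ together with $I\leq K$ to obtain $V\leq(b\delta K+u_1^{\mathrm{MTD}})/\gamma$, and similarly $C\leq u_2^{\mathrm{MTD}}/\psi$ from the $C$-equation. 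With these bounds in place, conditions (a)--(e) are all satisfied and the cited existence theorem delivers an optimal pair $(u_1^*,u_2^*)$ together with its corresponding optimal states $(U^*,I^*,V^*,C^*)$, which completes the proof.
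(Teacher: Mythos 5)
Your proposal is correct and follows essentially the same route as the paper: both invoke the Fleming--Rishel existence theorem (Theorem 4.1, Chapter III) and verify nonemptiness of the admissible set, linearity of the dynamics in the controls, convexity of the integrand, and boundedness of the state trajectories. If anything you are more careful than the paper, which justifies boundedness by pointing to Appendix~\ref{appendixA} even though the control system \eqref{eq:control_system} drops the immune compartments and adds the external source $u_1(t)$ to the virus equation, whereas you correctly re-derive the a priori bounds for the reduced $(U,I,V,C)$ system.
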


\begin{proof}
The proof of Proposition~\ref{prop:5} is based on Theorem 4.1 in Chapter III 
of Fleming and Rishel \cite{fleming2012deterministic}. The necessary conditions 
for existence are stated and verified as follows. 
\begin{enumerate}
\item[(1)] The set of all solutions to the control system (\ref{eq:control_system}) 
and its associated initial conditions and the corresponding 
control functions in $\Lambda$ is non-empty. 

\item[(2)] The control system can be written as a linear function of the 
control variables with coefficients dependent on time and state variables. 

\item[(3)] The integrand in the objective functional in Equation 
\eqref{eq:control_system} is convex on $\Lambda$. 
\end{enumerate}
The right hand sides of the control system \eqref{eq:control_system} are $\mathbb{C}^1$ 
and bounded below and above (see Appendix \ref{appendixA}), thus the solutions to the 
state equations are bounded. It therefore follows from the Picard--Lindel{\"o}f theorem 
\cite{J11} that the system is Lipschitz with respect to the state variables. Thus, 
condition (1) holds. It can be seen from the control system \eqref{eq:control_system} 
that the right hand sides are linearly dependent of $u_1$ and $u_2$. Thus, condition (2) 
also holds. To establish condition (3), we notice that the integrand $\mathcal{L}$ 
in the objective functional \eqref{eq:control_system} is convex 
because it is quadratic in the controls.      
\end{proof}


\subsection{Optimal control characterization} 

In this section, we characterize the optimal controls $u^* = (u_1^*, u_2^*)$ 
which gives the optimal levels for the various control measures and the corresponding 
states ($U^*$, $I^*$, $V^*$, $C^*$). The necessary conditions for the optimal controls 
are obtained using  the Pontryagin's Maximum Principle \cite{pontryagin1987mathematical}. 
This principle converts the model system (\ref{eq:control_system}) into a problem of 
minimizing pointwise a Hamiltonian, $\mathcal{H}$, with respect 
to $u_1$ and $u_2$ as detailed below. 

\begin{proposition}
Let $T=(U,I,V,C)$ and $u=(u_1,u_2)$. If $(T^*(t),u^*(t))$ is an optimal control pair, 
then there exists a non-trivial vector \[\lambda(t) =(\lambda_1(t),\lambda_2(t),
\cdots \lambda_4(t) )\] satisfying the following: 
\begin{align}
\label{eq:char_conditions}
\begin{split}
\lambda_1 ^{\prime}(t) & = -1 -\lambda_1 \bigg[ \alpha \left(1-\frac{2U+I}{K}\right)
- \frac{\beta K_u V }{(K_u +U)^2} - \frac{\delta_U  C}{K_c +C} \bigg],\\ 
& -\lambda_2 \bigg[\frac{\beta K_u V}{(K_u +U)^2}\bigg] 
+ \lambda_3 \bigg[\frac{\beta K_u V}{(K_u +U)^2}\bigg],\\
\lambda_2 ^{\prime}(t) & =-1-\lambda_1 \bigg[\frac{\alpha U}{K} \bigg] 
+ \lambda_2 \bigg[\delta +\frac{\delta_I C}{K_c+C} \bigg]-\lambda_3 b\delta, \\
\lambda_3 ^{\prime}(t) &  = \lambda_1 \bigg[\frac{\beta U}{K_u +U}\bigg] 
-\lambda_2 \bigg[\frac{\beta U}{K_u +U} \bigg] 
+\lambda_3 \bigg[\frac{\beta U}{K_u +U}+\gamma \bigg],\\
\lambda_4 ^{\prime}(t) &  = \lambda_1 \bigg[\frac{K_c \delta_u U}{(K_c +C)^2} \bigg] 
+ \lambda_2 \bigg[\frac{K_c \delta_I I}{(K_c +C)^2} \bigg] +\lambda_4 \psi,\\
\end{split}
\end{align}
with transversality conditions 
\begin{align}\label{eq:transversality-conditions}
\lambda_i (T_f) = 0,\quad  i=1,2,\cdots 4.
\end{align}
and optimal controls:
\begin{align}
\label{eq:conditions:a}
u_1^*  = \min \bigg[u_1^{\text{MTD}},\max \bigg(0,-\frac{\lambda_3}{A_1}\bigg)\bigg], 
\quad u_2^* = \min \bigg[u_2^{\text{MTD}},\max  \bigg(0,-\frac{\lambda_4}{A_2}\bigg)\bigg].
\end{align}
\end{proposition}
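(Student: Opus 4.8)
The plan is to apply Pontryagin's Maximum Principle directly, since Proposition~\ref{prop:5} has already established that an optimal pair $(u_1^*,u_2^*)$ with associated trajectory $(U^*,I^*,V^*,C^*)$ exists; the principle then furnishes a nontrivial adjoint vector $\lambda=(\lambda_1,\lambda_2,\lambda_3,\lambda_4)$ together with the stated necessary conditions. First I would form the control Hamiltonian by pairing the running cost with the adjoint variables times the right-hand sides of the state system \eqref{eq:control_system}:
\[
\mathcal{H} = U + I + \frac{A_1}{2}u_1^2 + \frac{A_2}{2}u_2^2 + \sum_{i=1}^{4}\lambda_i f_i,
\]
where $f_1,\dots,f_4$ denote the right-hand sides of the equations governing $U,I,V,C$, respectively.

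Next I would read off the four adjoint equations from $\lambda_i'(t) = -\,\partial \mathcal{H}/\partial x_i$ evaluated along the optimal trajectory, with $(x_1,x_2,x_3,x_4)=(U,I,V,C)$. The only calculation requiring care is the differentiation of the two saturating terms: the quotient rule gives $\partial_U\!\big(\beta U V/(K_u+U)\big) = \beta K_u V/(K_u+U)^2$ and $\partial_C\!\big(\delta_U U C/(K_c+C)\big) = K_c \delta_U U/(K_c+C)^2$, which are exactly the factors appearing in \eqref{eq:char_conditions}. Because the terminal states are free and the functional \eqref{eq:objectivefunctional} carries no terminal payoff term, the transversality conditions collapse to $\lambda_i(T_f)=0$, which is \eqref{eq:transversality-conditions}.

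It remains to characterize the controls through the pointwise minimum condition: $(u_1^*,u_2^*)$ minimizes $\mathcal{H}$ over $\Lambda$ at each time. Since $\mathcal{H}$ is quadratic in each control with strictly positive leading coefficients $A_1,A_2>0$, it is strictly convex in $(u_1,u_2)$, so the unconstrained minimizer is obtained from the stationarity relations $\partial\mathcal{H}/\partial u_1 = A_1 u_1 + \lambda_3 = 0$ and $\partial\mathcal{H}/\partial u_2 = A_2 u_2 + \lambda_4 = 0$, yielding the vertices $-\lambda_3/A_1$ and $-\lambda_4/A_2$. Strict convexity guarantees that the constrained minimizer over $[0,u_i^{\mathrm{MTD}}]$ is simply the projection of each vertex onto that interval, which is precisely the $\min$--$\max$ form recorded in \eqref{eq:conditions:a}.

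I do not expect a genuine obstacle, as the argument is essentially bookkeeping once the Hamiltonian is written down. The step most susceptible to error is the correct differentiation of the Michaelis--Menten terms when assembling the adjoint system, where a single sign slip would corrupt the coefficients multiplying $\lambda_2$ and $\lambda_3$; cross-checking these against the quotient-rule expressions above is therefore the decisive verification. The convexity-and-projection step is standard for quadratic-cost optimal control and needs only the positivity of the balancing weights $A_1$ and $A_2$.
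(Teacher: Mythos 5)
Your proposal is correct and follows essentially the same route as the paper: form the Hamiltonian, obtain the adjoint system from $\lambda'(t)=-\partial\mathcal{H}/\partial T$ with zero terminal conditions since the states are free at $T_f$, and derive the control characterization from the stationarity relations $A_1u_1+\lambda_3=0$, $A_2u_2+\lambda_4=0$ projected onto the admissible bounds. Your explicit quotient-rule check of the Michaelis--Menten terms and the convexity justification for the projection step are slightly more detailed than the paper's ``standard control arguments,'' but the argument is the same.
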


\begin{proof}
 The Lagrangian and Hamiltonian for the optimal control 
 system \eqref{eq:control_system} are respectively given by:
\begin{align}
\label{eq:Lagrangian}
\mathcal{L} (U,I,V,C,u_1,u_2) = U +I + \frac{1}{2}\bigg[A_1u_1^2+A_2 u_2^2\bigg] 
\end{align}
and 
\begin{multline}
\label{eq:Hamiltonian}
\mathcal{H} =  U+ I + \frac{1}{2}\bigg[A_1u_1^2+A_2 u_2^2\bigg]  
+ \lambda_1 \bigg[ \alpha U\left(1-\frac{U+I}{K}\right)- \frac{\beta UV}{K_u +U} 
- \frac{\delta_U  UC}{K_c +C} \bigg]\\ 
+ \lambda_2 \bigg[ \frac{\beta UV}{K_u +U}- \delta I -\frac{\delta_I IC}{K_c +C} \bigg] 
+ \lambda_3 \bigg[b\delta I- \frac{\beta UV}{K_u +U}-\gamma V + u_1 \bigg] 
+ \lambda_4\big[u_2 - \psi C \big].
\end{multline}
We thus obtain Equation \eqref{eq:char_conditions} 
using the Pontryagin's maximum principle from
\begin{align}
\label{eq:conditions:b}
\begin{split}
\lambda ^{\prime}(t) 
= -\frac{\partial \mathcal{H}}{\partial T}\bigg(t,T^*(t),u^*(t), \lambda(t) \bigg).
\end{split}
\end{align}
The transversality conditions are as given in Equation \eqref{eq:transversality-conditions} 
since all states are free at $T_f$. The Hamiltonian is maximized with respect to the controls 
at the optimal control $u^* = (u_1^*, u_2^*)$. Therefore $\mathcal{H}$ is differentiated 
with respect to $u_1$ and $u_2$ on $\Lambda$, respectively, to obtain 
\begin{align*}
0=\frac{\partial \mathcal{H }}{\partial u_1} & = A_1 u_1 + \lambda_3,\\
0=\frac{\partial \mathcal{H}}{\partial u_2} & = A_2 u_2 +\lambda_4. 
\end{align*}
Thus, solving for $u_1^*$ and $u_2^*$ on the interior sets gives
\[
u_1^* = -\frac{\lambda_3}{A_1}, \quad u_2^* = -\frac{\lambda_4}{A_2}.
\]
By standard control arguments involving the bounds on the controls, 
$0\leq u_1(t)\leq u_1^{\text{MTD}},~~0\leq u_2(t)\leq u_2^{\text{MTD}}$, 
we conclude that:
\begin{align}
\label{eq:OCconditions}
u_1^*  = \min \bigg[u_1^{\text{MTD}},\max \bigg(0,-\frac{\lambda_3}{A_1}\bigg)\bigg], 
\quad u_2^* = \min \bigg[u_2^{\text{MTD}},\max  \bigg(0,-\frac{\lambda_4}{A_2}\bigg)\bigg].
\end{align}
\end{proof}


\subsection{The optimality system}
\label{OCSystem}

In summary, the optimality system consists of the control system 
(\ref{eq:control_system}) and the adjoint system \eqref{eq:char_conditions} 
with its transversality conditions \ref{eq:transversality-conditions}, 
coupled with the  control characterizations \eqref{eq:OCconditions}. Next, 
we proceed to solve numerically the proposed model and the optimal control problem. 


\section{Numerical simulations}
\label{sec:Numerical_simulations}

In this section, we discuss the numerical solutions of both the chemovirotherapy 
model equations (\ref{eq:uninfected})--(\ref{eq:drug}) and the optimal control 
problem defined in Section~\ref{OCSystem}. We also outline the parameter choices 
and the initial conditions. We use parameter values in Table~\ref{tab:parametervalues1} 
to solve model equations and the optimality system. The numerical solutions of the model 
equations are illustrated using MATLAB, while the optimality system 
was solved using a fourth order Runge--Kutta iterative method.


\subsection{Parameter values and initial conditions}

Some of the parameter values were obtained from fitted experimental data for untreated 
tumours and virotherapy in mice \cite{ZTK08} and others from biological facts in the literature. 
A tumour nodule can contain about $10^5-10^9$ tumour cells \cite{SS95}. The carrying capacity 
is therefore considered to be $10^6$ cells per $mm^{3}$. \textit{In vivo} experiments estimate 
the intrinsic rate of growth to be $0.1-0.8$ day$^{-1}$ \cite{BL14}. We consider the number 
of viruses produced per day, $b$, to be in the range $0-1000$ virions \cite{BD90}. The amount 
of drug infused in body tissue, $q$, is considered to be $5$ milligrams per day and the decay rate, 
$\psi$, to be $4.17$ milligrams per day, values which conform to cancer pharmacokinetic studies 
\cite{GS11,RGG11}. Since infected tumour cells multiplication is enhanced by the oncolytic 
virus replication, the tumour cells lysis, $\delta_I$, is considered to be greater than that 
for uninfected tumour cells $\delta_U $ (see Ref \cite{SDP}). We considered a virus-specific 
immune response rate, $\phi$, of $0.7$ day$^{-1}$ \cite{bollard2016t}. Both virus and tumour 
specific immune decay rates are assumed to be $0.01$ day$^{-1}$, given the fact that 
their lifespan is less than 100 days \cite{le2014mathematical,nayar2015extending}. 
The tumour-specific production rate was estimated at $0.5$ cells per mm$^3$ 
per day \cite{gajewski2013innate,le2014mathematical}. 
In all our simulations, unless stated otherwise, we considered the initial concentrations 
$U_0=10000$ cells per mm$^3$, $I_0=100$ cells per mm$^3$, $V_0=500$ virions per mm$^3$, 
$E_{v_0}=100$ cells per mm$^3$, $E_{T_0}=100$ cells per mm$^3$ and $C_0=100$ g/ml 
with a high percentage of untreated tumour cell count to require treatment.


\subsection{Chemovirotherapy model}

\begin{figure}
\centering 
\includegraphics[trim=0.5cm 5cm 0.5cm 3cm,clip=true,width=13cm]{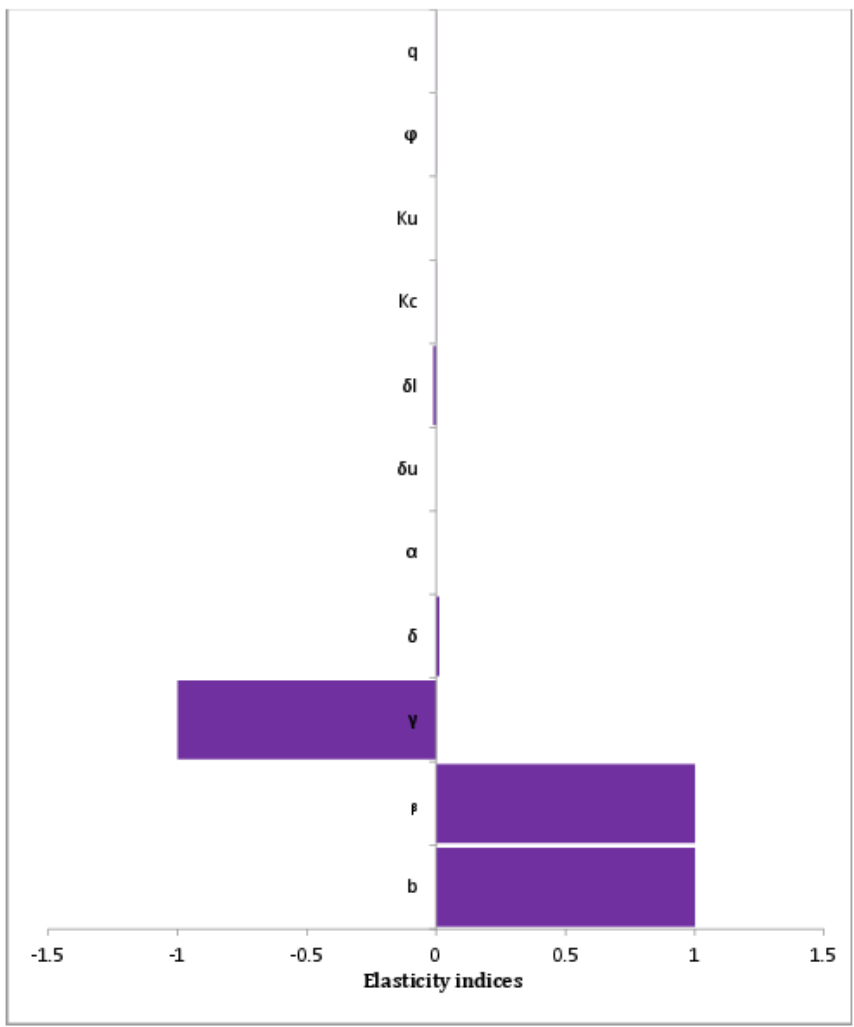}
\caption{Visual representation of the elasticity indices of $R_0$ with respect 
to model parameters. The bar graph shows that the virus burst size, infection and decay 
rates $b$, $\beta$ \& $\gamma$ have the highest elasticity indices with virus 
decay being negatively correlated to $R_0$.}
\label{fig:elasticity_indices}
\end{figure}

\begin{figure}
\centering 
\begin{tabular}{c}
(a)\includegraphics[width=12.5cm,height=10cm]{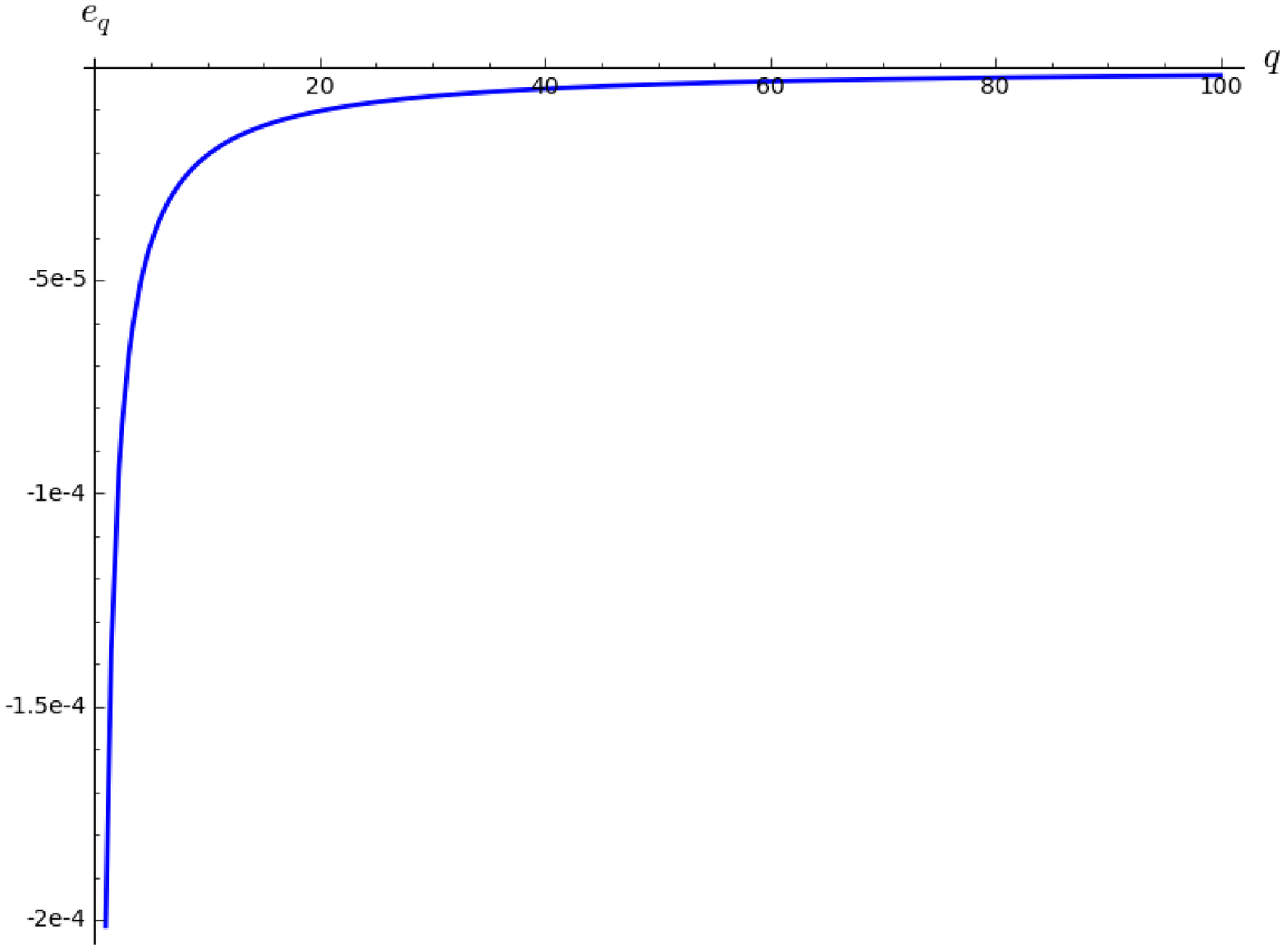}\\
(b)\includegraphics[width=12.5cm,height=10cm]{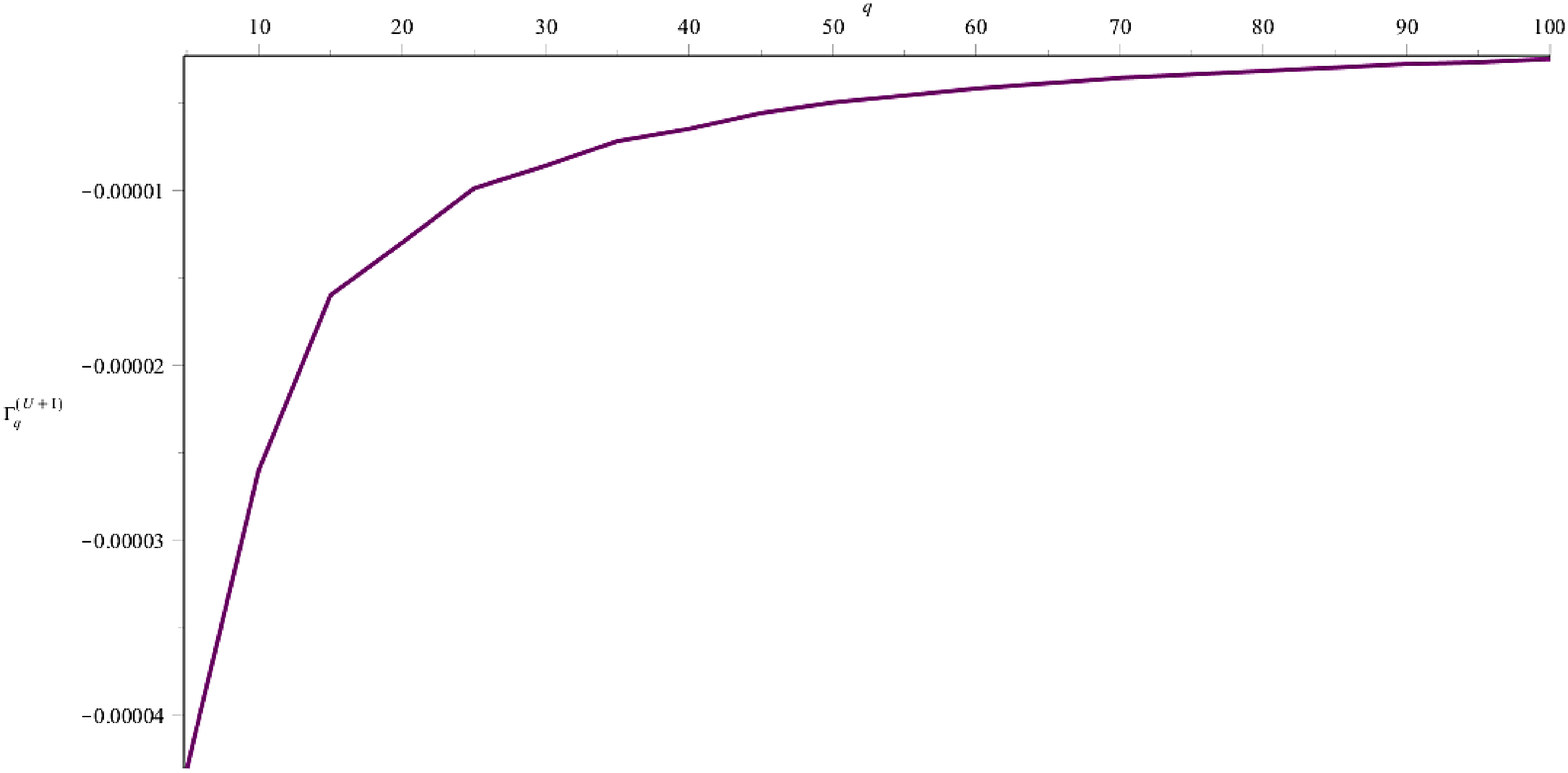}
\end{tabular}
\caption{Plots of the elasticity indices, $e_q$, and $\Gamma^{U^*+ I^*}_q$ 
against the drug dosage $q$. Both Figures (a) and (b) depict that increasing the amount 
of drug infused reduces viral multiplication thus reducing the sensitivity indices. 
The figures further suggest that values of $q$ from 40 to 100 mg/l have 
minimal negative impact on viral replication.}
\label{fig:tumour-sensitivity}
\end{figure}

\begin{figure}
\centering 
\begin{tabular}{c}
a \includegraphics[scale=0.87]{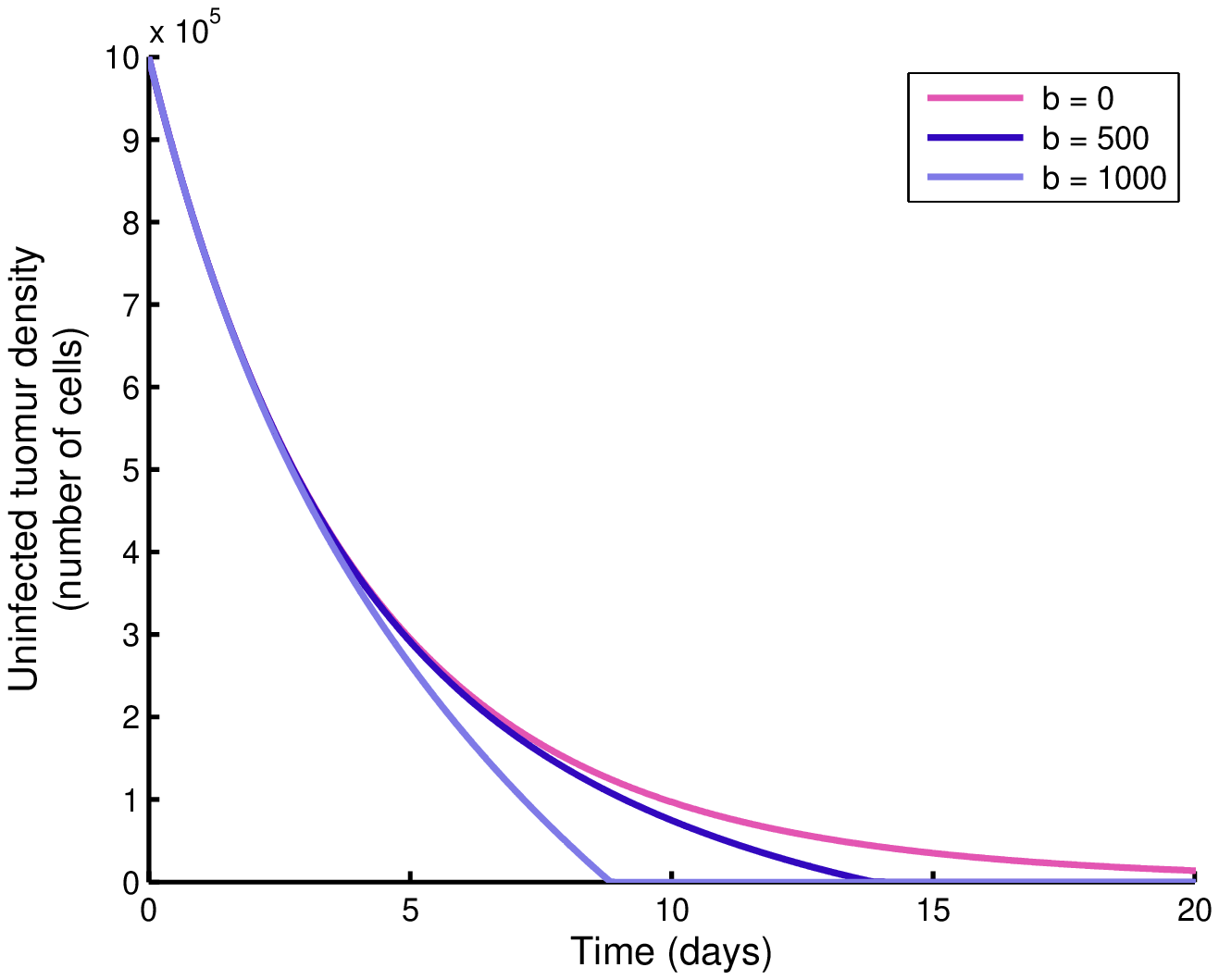}\\
b \includegraphics[scale=0.87]{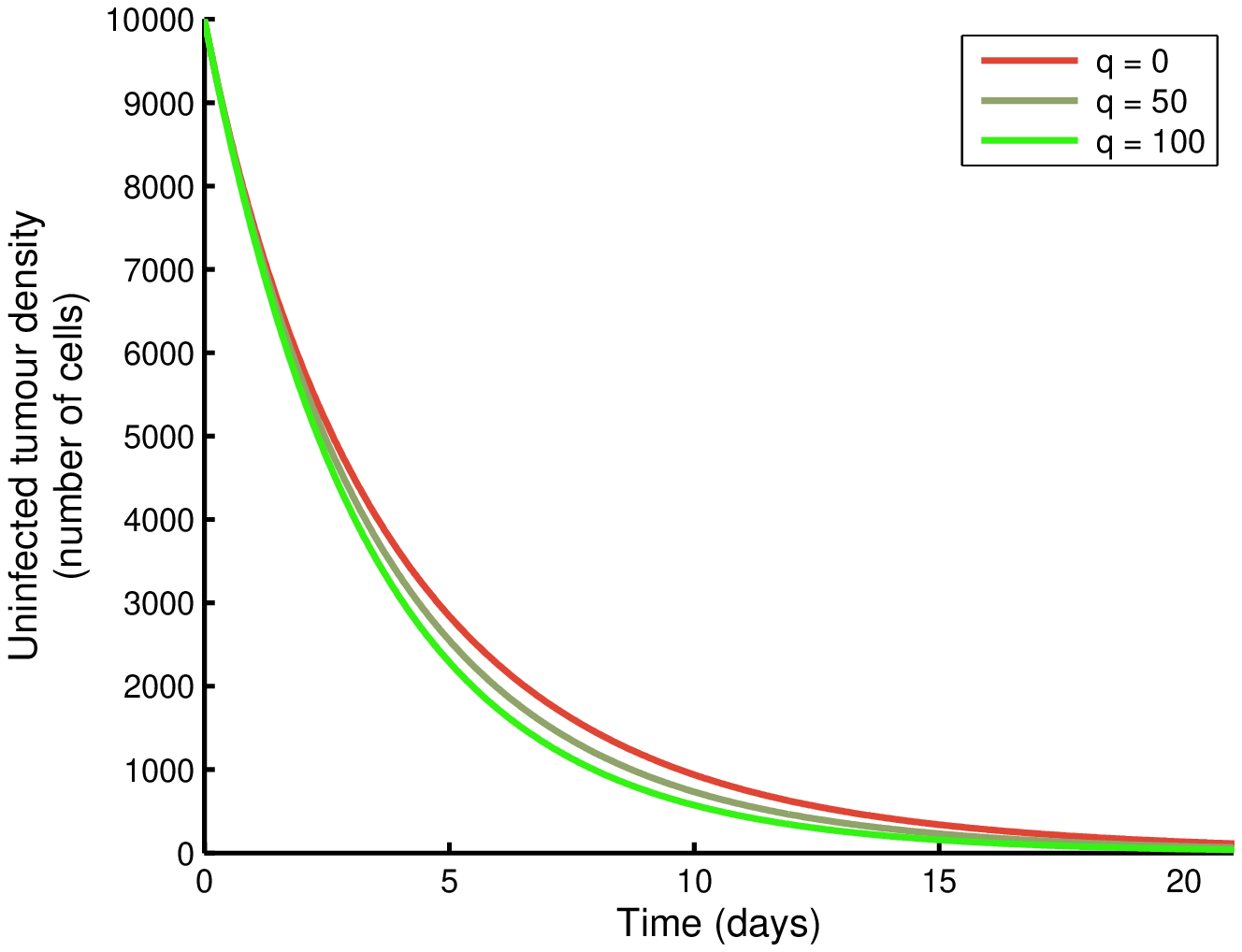}
\end{tabular}
\caption{(a) Plots of uninfected cell density with constant drug infusion 
and for different virus burst sizes. The plot shows that an increase in the virus 
burst size reduces the tumour density. (b) Plots of uninfected cell density 
with constant drug infusion and for different drug infusion rates. The plot shows 
that an increase in the drug infusion rate reduces the tumour density 
by a relatively small magnitude.}
\label{fig:constant_uninfected_varyingbq}
\end{figure}

\begin{figure}
\centering 
\begin{tabular}{c}
a \includegraphics[scale=0.7]{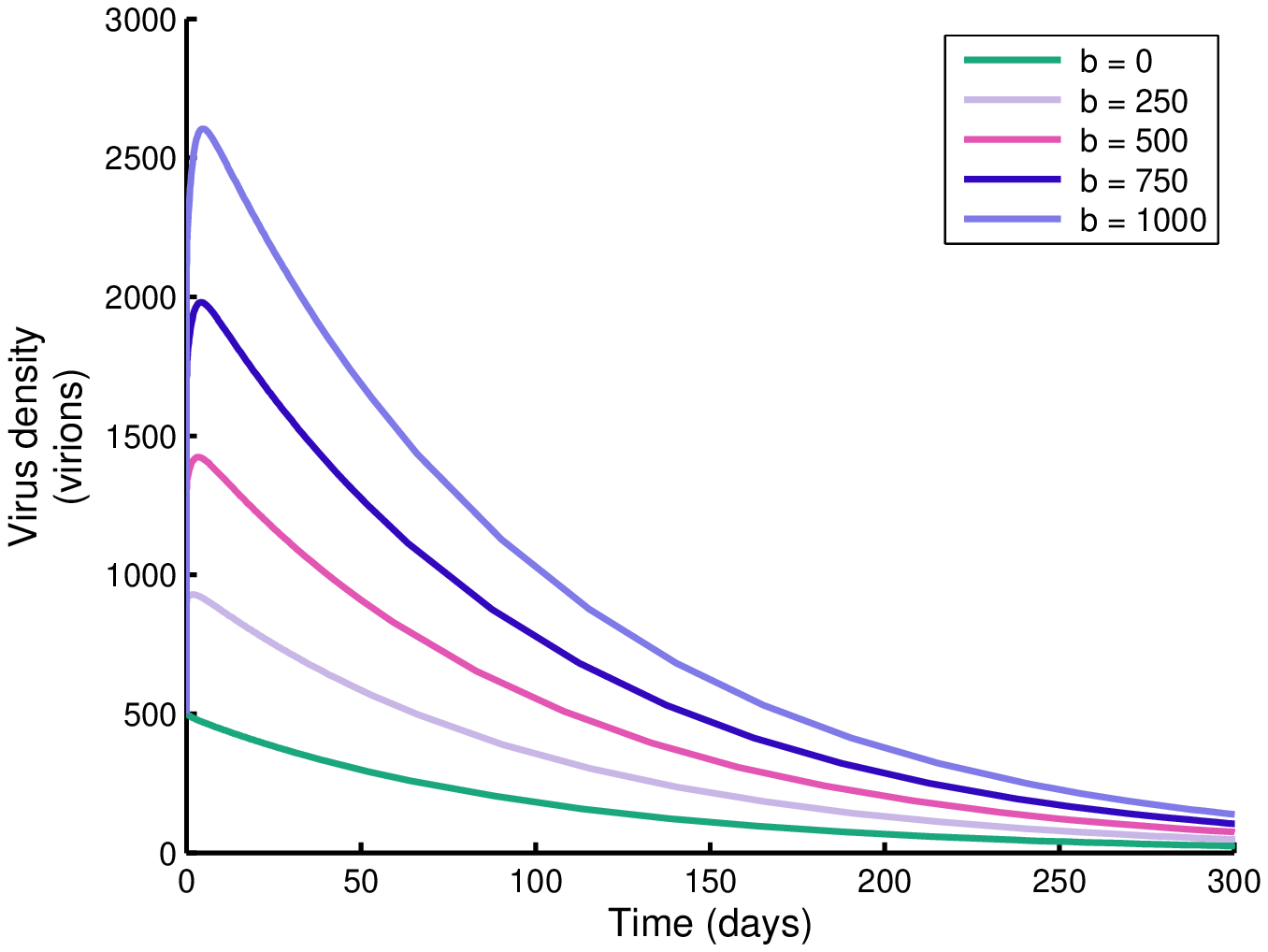}\\
b \includegraphics[scale=0.7]{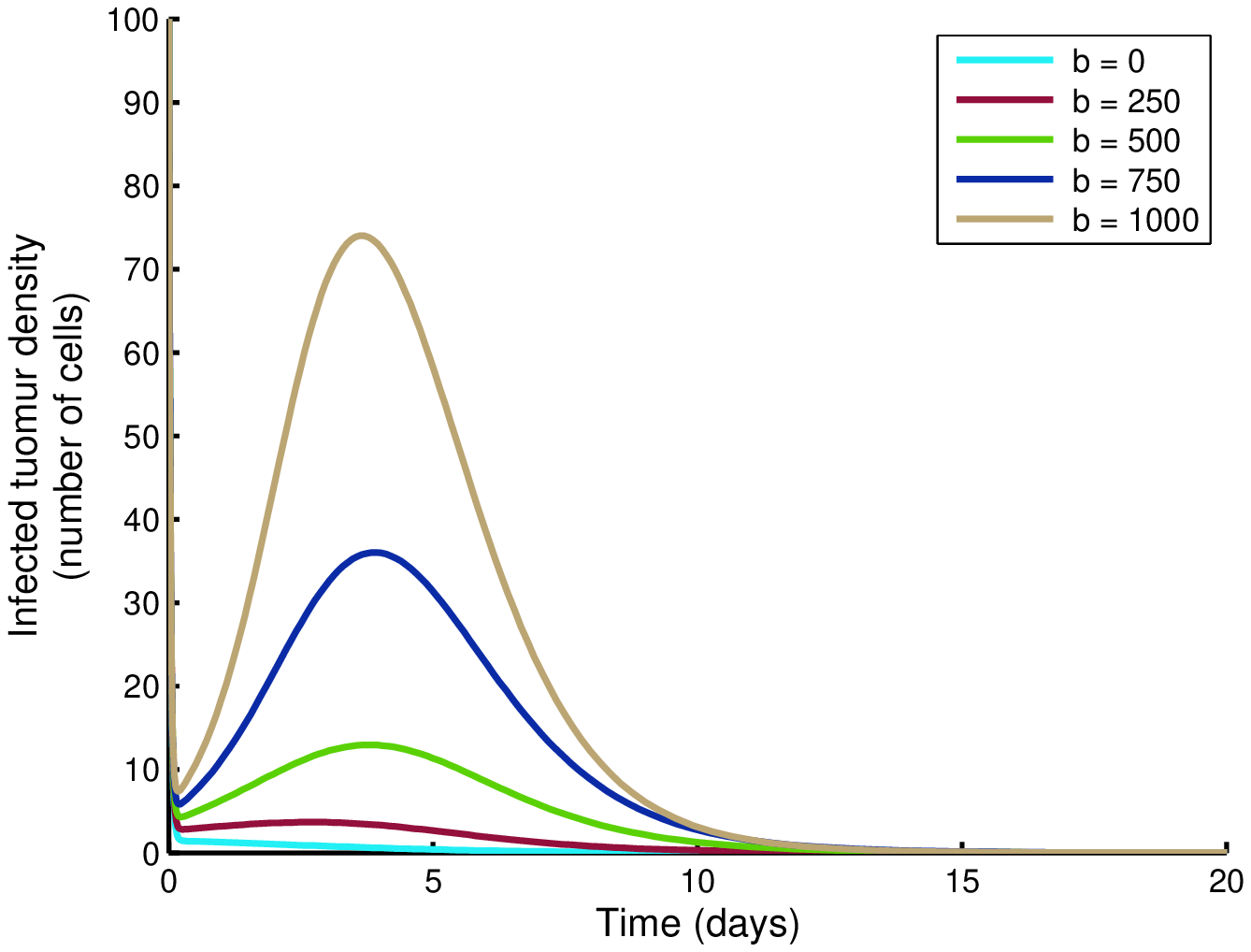}
\end{tabular}
\caption{Plots of the virus and infected tumour densities for different 
virus burst size. The plots show that both densities increase 
with increasing virus burst size.}
\label{fig:constant_virus_varyingb}
\end{figure}

\begin{figure}
\centering 
\begin{tabular}{c}
a \includegraphics[scale=0.7]{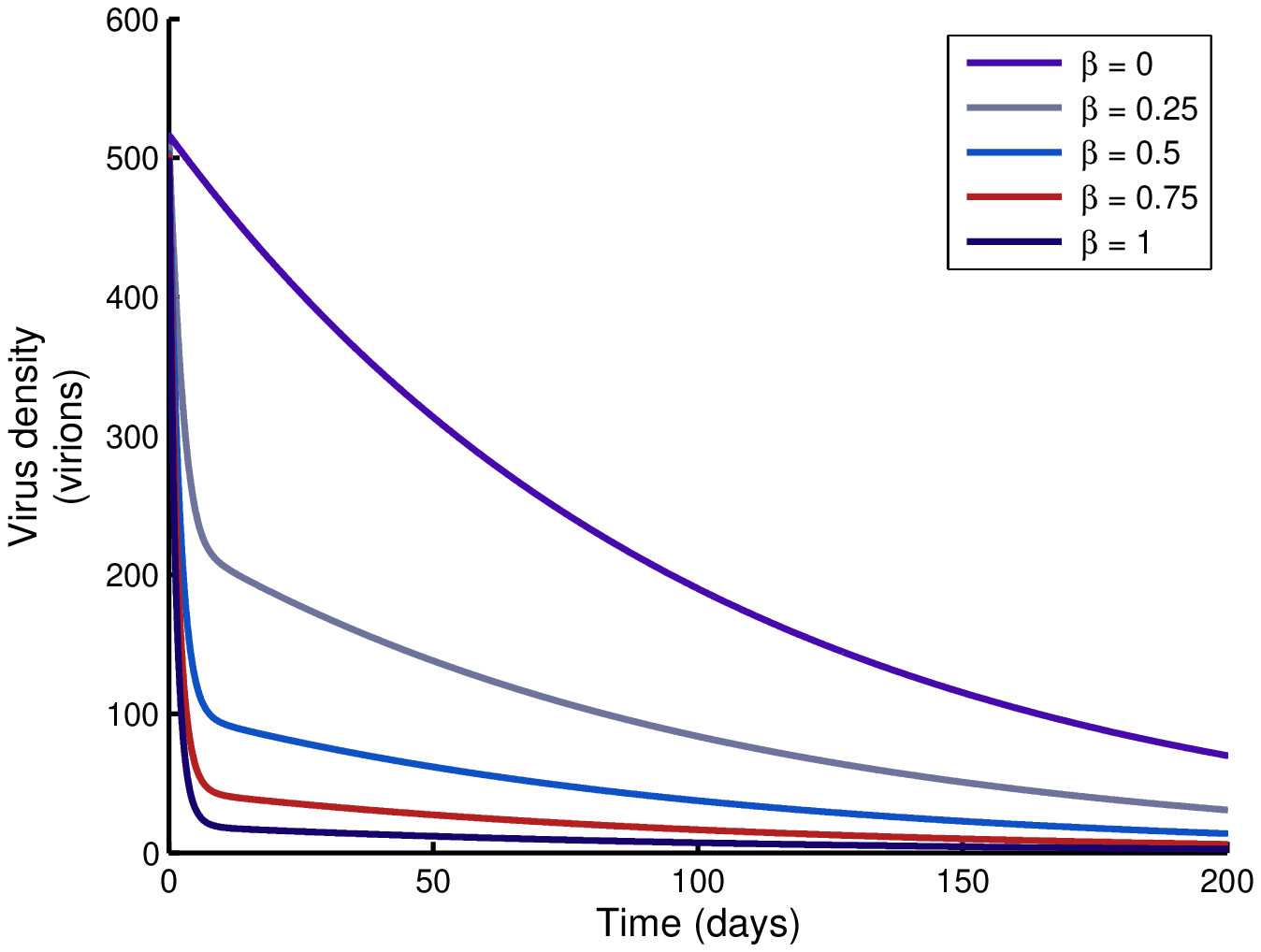}\\
b \includegraphics[scale=0.7]{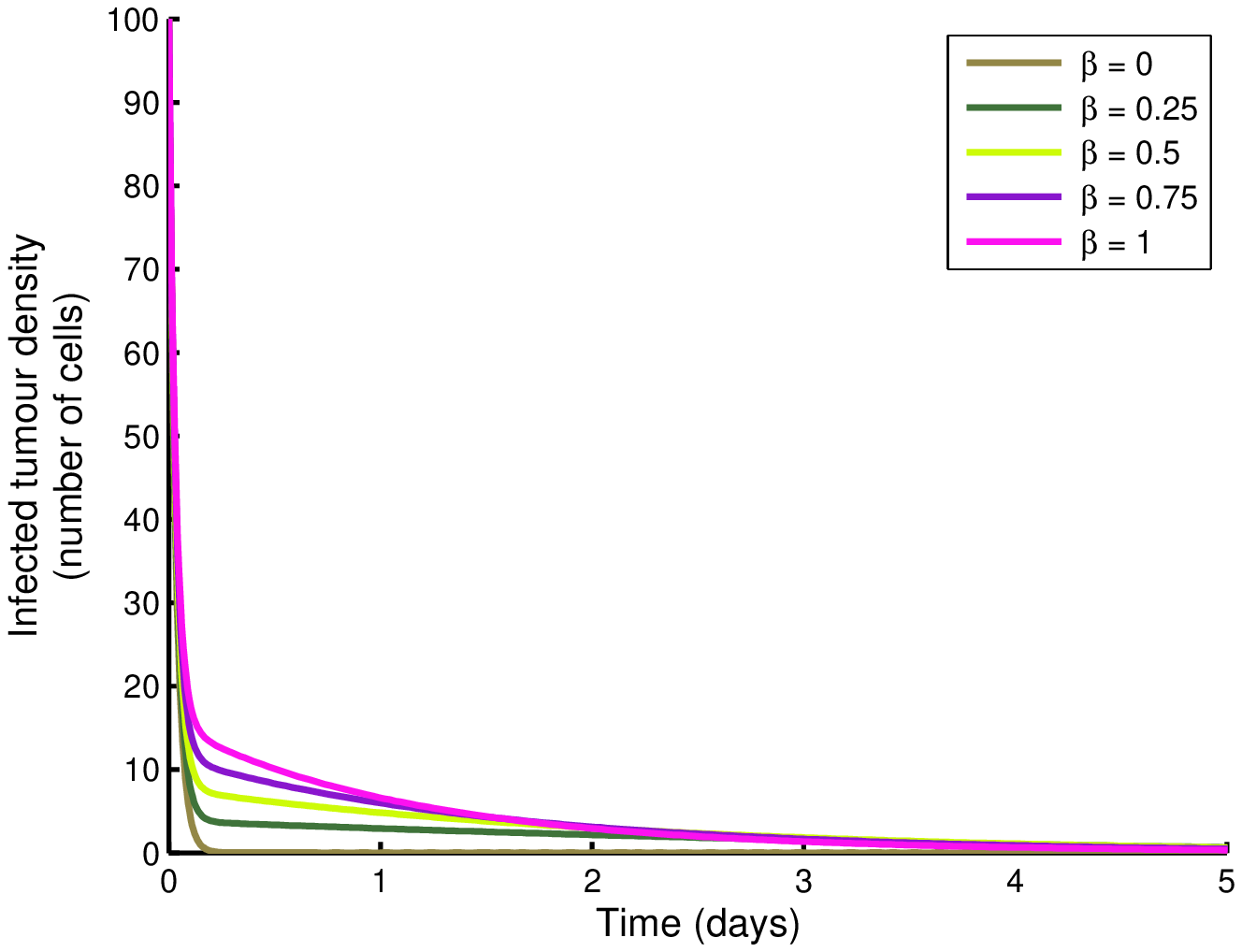}
\end{tabular}
\caption{(a) Plots of the virus density for different virus infection rate. 
The plots show the virus density reduces with increasing virus infection rates.
(b) Plots of infected tumour density for different infection rates. 
The figure shows that the infected tumour density increases as the infection rate increases.}
\label{fig:constant_virus_varyingbeta}
\end{figure}

Figures~\ref{fig:constant_uninfected_varyingbq} (a)\&(b) are respectively plots 
of the uninfected tumour density for different virus and drug doses. Both figures 
depict that an increase in the virus burst size ($b$) and drug dose ($q$) reduces 
the tumour density. In Figure \ref{fig:constant_uninfected_varyingbq} 
(a), without virotherapy, that is $b = 0$ virions, the tumour is depleted in about 
10 days whereas without chemotherapy, that is $q=0$ milligrams per day, it takes about 20 days.

Figures~\ref{fig:constant_virus_varyingb} and \ref{fig:constant_virus_varyingbeta} (a)\&(b) 
respectively show the effect of virus burst size on the virus and infected tumour cell densities. 
Both figures show that the densities with increasing burst size and infection rate. 
From Figure, for example \ref{fig:constant_virus_varyingb}, an increase of $b$ 
from 250 to 500 virions led to an increase of the virus number to about 
$700$ $virions$ from $500$ $virions$, in the first 100 days. 
 
Figure~\ref{fig:constant_virus_varyingbeta} (a)\&(b) show that increasing virus infection rate, 
$\beta$, reduces the virus density and increases the infected tumour density. For example 
from Figure~\ref{fig:constant_virus_varyingbeta} (a), an increase in $\beta$ from 0 to 1 
cells per mm$^3$ per day led to a reduction by about 100 viruses from 120 whereas 
it led to an increase by about 10 viruses in the first day, from 0 as seen 
in Figure~\ref{fig:constant_virus_varyingbeta} (b). 
 

\subsection{Optimal control problem}	

Figure~\ref{fig:optimal-tumour} is a plot of total tumour density of the optimal 
control solution to the problem formulated in Section~\ref{sec:Optimal_control_analysis}. 
It shows the tumour density being reduced by the combinational therapy 
treatment to a very low state in less than a week. 
	
\begin{figure}
\centering 
 \includegraphics[scale=0.7]{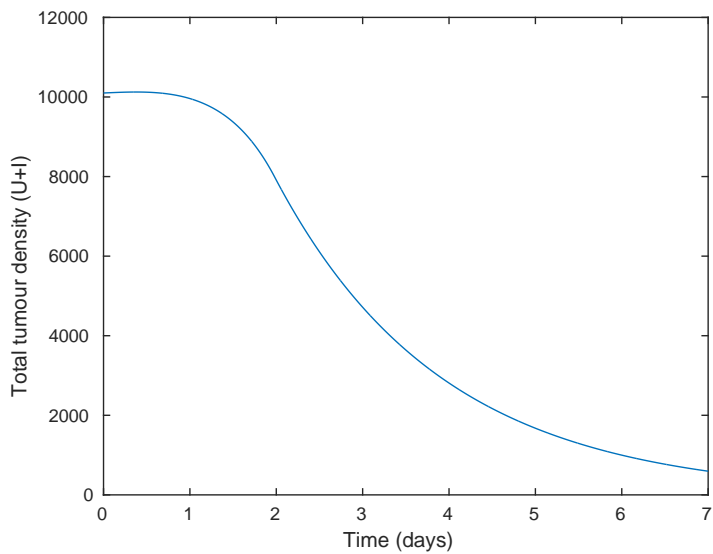}
\caption{Total tumour density with optimal control. 
The tumour density is reduced in a very short time period.}
\label{fig:optimal-tumour}
\end{figure}

Figures~\ref{fig:optimal-u_1} (a)\&(b) represent the optimal controls $u_1$ and $u_2$. 
The figures suggest that the optimal values of the number of viruses and drug concentration 
that yields efficient combination therapy are half their maximal 
dosages $u_1^{\text{MTD}}$ and $u_2^{\text{MTD}}$. 

\begin{figure}
\centering 
\begin{tabular}{c}
(i)\includegraphics[scale=0.7]{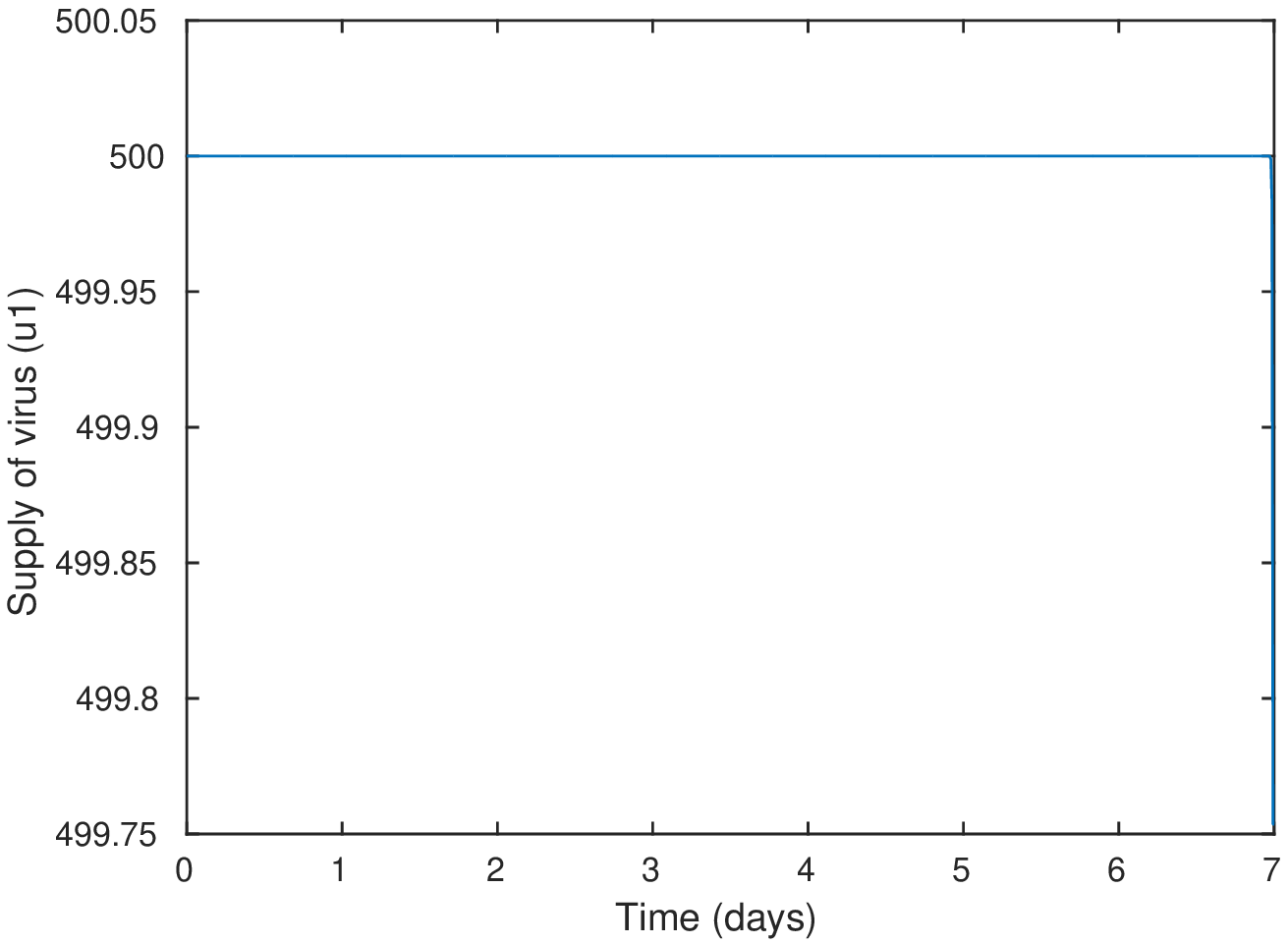}\\
(ii)\includegraphics[scale=0.7]{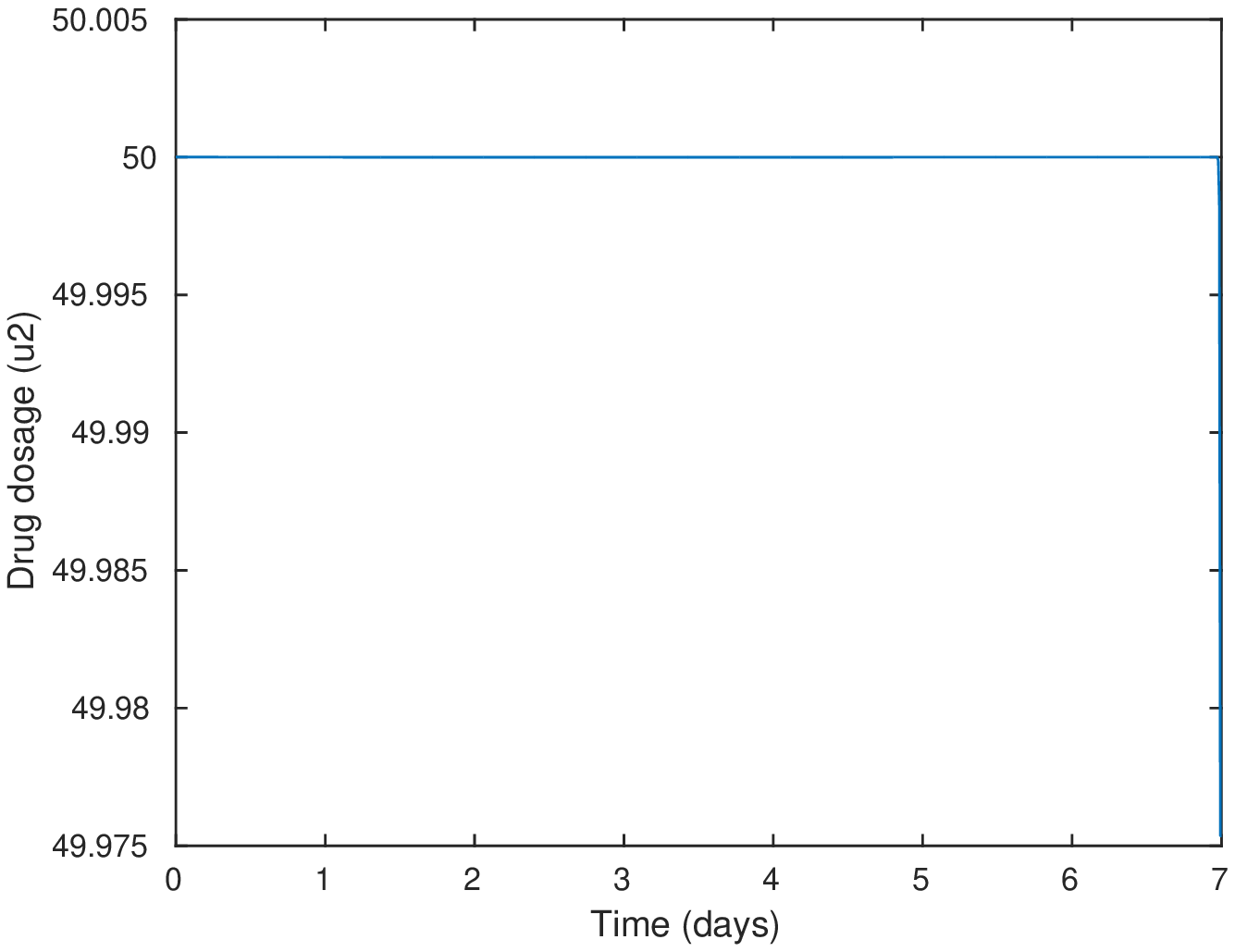}
\end{tabular}	
\caption{Optimal control variable variables $u_1$: The external supply 
of viruses and  $u_2$: Drug dosage.  The Figures depict 500 virions as the optimal 
number of viruses and the optimal drug dosage to be 50 m/g per.}
\label{fig:optimal-u_1}
\end{figure}


\section{Conclusion}
\label{conclusion}

Successful cancer treatment often requires a combination of treatment regimens. 
Recently, combined oncolytic virotherapy and chemotherapy has been emerging 
as a promising, effective and synergetic  cancer treatment.  In this paper 
we consider a combination therapy with chemotherapy and virotherapy 
to investigate how virotherapy could enhance chemotherapy. 
To this end,  we developed a mathematical model and an optimal control problem 
to determine the optimal chemo-virus combination to eradicate a tumour. 
We firstly validated the model's plausibility by proving existence, 
positivity and boundedness of the solutions. 
We analysed the model in four scenarios: without treatment, with chemotherapy, 
with oncolytic virotherapy alone, and with combined chemotherapeutic drug and virus therapy. 
A basic reproduction number for the infection of the tumour cells was calculated 
to analyse the model's tumour endemic equilibrium. Furthermore, sensitivity 
and elasticity indices of the basic reproduction number and tumour endemic 
equilibria with respect to drug infusion were calculated. The optimal control 
problem was solved using the Pontryagin's Maximum Principle. Numerical solutions 
to the proposed model were carried out to illustrate the analysis results. 
Similarly, numerical solutions to the optimal control problem were carried out to identify 
the optimal dosages for the minimization of the chemo-virus combination. 
Model analysis and simulations suggest the following  results.

The stability analysis showed that a tumour can grow to its maximum size 
in a case where there is no treatment. It also demonstrated that chemotherapy 
alone is capable of clearing tumour cells provided that the drug efficacy 
is greater than the intrinsic tumour growth rate. This can be evidenced 
in a study by Dasari and Tchounwou \cite{dasari2014cisplatin} and references therein.
Furthermore, the result emphasised that if the chemotherapeutic drug's efficacy 
is high enough but with a toxic drug then the tumour cannot be eliminated. If, however, 
the drug is not toxic, then a dosage which is not larger than the maximum tolerated 
dose can be given while still allowing for a stability condition of the tumour 
free equilibrium, thus eliminating the tumour. 

In the case of virotherapy alone, stability analysis revealed that virotherapy 
on its own may not clear tumour cells  but rather highly enhances chemotherapy 
in destroying tumour cells. Several clinical and experimental studies for example 
\cite{PJ10,binz2015chemovirotherapy,VJ02,relph2016cancer} confirm this result. 
In a review of clinical studies by Binz and Ulrich \cite{binz2015chemovirotherapy}, 
it is stated that phase II/III clinical trials on combining adenovirus H101 with 
a chemotherapeutic drug, cisplatin and 5-fluorouracil, leads to a 40\% improvement 
compared to chemotherapy alone for the treatment of patients with head and neck cancer. 
Adenovirus Ad-H101 was consequently approved for the treatment of head
and neck cancers in China and recently in the United states after phase III clinical 
trials showed a 72--79\% response rate for the combination of the virus with 
5-fluorouracil compared to a 40\% with chemotherapy alone  \cite{garber2006china}.  

Sensitivity analysis and elasticity indices of the basic reproduction ratio indicated 
that successful chemovirotherapy is highly dependent on the virus burst size and 
its infection rate as well as on the drug infusion and its decay rates. Larger virus 
burst size and higher replication rates lead to a lower tumour concentration. Whereas, 
low decay and high doses of the drug that the body can tolerate lead to better treatment 
results. Sensitivity analysis of both the basic reproduction ratio and model equilibria 
suggested the feasible drug infusion to be between 50 and 100 milligrams per litre. 
The effect of virus burst size and infection rates in determining the success 
of virotherapy have extensively been confirmed in recent mathematical studies 
(see for example \cite{malinzi2017modelling,JP11,crivelli2012mathematical}). 
Nonetheless, the right dose of chemotherapy treatment has always been a question 
and a concern to both clinicians and mathematicians alike 
\cite{konstorum2017addressing,frei1980dose}. In this study, optimal control 
results further indicated that $50\%$ of the maximum drug and virus 
tolerated dosages (MTDs) optimize chemovirotherapy. 

In addition, numerical simulations suggested that with the use of both chemotherapy 
and virotherapy, a tumour may be eradicated in less than a month. We know that this 
is not realistic for human cancers and is strictly based on the experimental data used 
in the numerics. Simulations further showed that a successful combinational therapy 
of cancer drugs and viruses is mostly dependent on virus burst size, infection rate 
and the amount of drugs supplied into a patient's body which is in agreement 
with recent studies \cite{malinzi2017modelling,JP11,crivelli2012mathematical}. 

The mathematical model we developed in this study is a simple one, for example, 
it considers the cell densities to only be time dependent. It is therefore imperative 
for further studies to incorporate more biological aspects like spatial variation 
of the cell concentrations. Another facet would be to investigate the effect of toxicity 
of both viruses and the drug on normal body tissue. 
The key to improving combined virotherapy and chemotherapy lies in quantifying 
the dependence of treatment outcome on immune stimulation. Another extension 
to this model, would be to include other subpopulations of the immune system. 
Nevertheless, the results here emphasise the treatment characteristics that are 
vital in combining drugs and viruses to treat cancer and an optimal drug 
and virus dosage is suggested.  


\begin{appendices}

\section{Appendix A}
\label{appendixA} ~~~

\medskip

\noindent A.1.  Proof of Theorem~\ref{thm:welposedness}(i) 
on the existence and uniqueness of model solutions

\begin{proof}
The functions on the right hand side of Equations (\ref{eq:uninfected})--(\ref{eq:drug}) 
are $\mathbb{C}^1 $ on $\mathbb{R}^n$. Thus, it follows from the Picard--Lindel{\"o}f 
theorem (see \cite{J11}) that (\ref{eq:uninfected})--(\ref{eq:drug}) exhibit a unique solution. 
\end{proof}

\noindent A.2.  Proof of Theorem~\ref{thm:welposedness}(ii)\&(iii) on positivity and boundedness

\begin{proof}
\label{proof1}
The solution to Equation (\ref{eq:drug}) is given by 
\[
C(t)=\exp(-\psi t)\left(\int_{0}^t g(s)\exp(\psi s)ds + C(0)\right) \geq 0.
\]
Let the model equations (\ref{eq:uninfected})-(\ref{eq:immune_T}) be written 
in the form $x^{\prime} = F(t,x)$. The functions $F(x,t)$ on the right hand side 
of Equations (\ref{eq:uninfected})--(\ref{eq:immune_T}) have the property of 
\[
F(U,I,V,E_V,E_T,t)\geq 0~\text{whenever}~ x\in [0,\infty )^n,~ x_j 
= 0,~t\geq 0, 
\]
where $x = (U,I,V,E_V,E_T)$. Thus, it follows from 
Proposition~A.1 in \cite{thieme2003mathematics} that 
\[
x(t)\in [0,\infty)^n ~\text{for all} ~t\geq t_0 \geq 0~~\text{whenever}~~ x(t_0) \geq 0
\]
and thus the model solutions $x=(U,I,V,E_V,E_T)$ are positive 
and bounded for positive initial values $x^0$. 
\end{proof}

\noindent A.3. Proof of Theorem~\ref{thm:welposedness}(iv) on positive invariance

\begin{proof}
\label{proof3}
The proof directly follows from proofs of Theorems~\ref{thm:welposedness} (i)--(iii).
\end{proof}


\section{Appendix B}
\label{appendixB} ~~~

\noindent B.1. Proof of stability of tumour endemic state, 
$X_2$, in Proposition~\ref{prop:1}. 

\begin{proof}
Denote $U^* : = U_p +U_r$ where 
\begin{align*}
U_p & = -\frac{b}{2a} = \frac{K}{2\alpha}\bigg(\alpha 
-  \frac{\alpha \kappa}{K}-M \bigg),\\
U_r & = \frac{\sqrt{b^2-4ac}}{2a} =  \frac{K}{2\alpha}\sqrt{\bigg(\alpha 
- \frac{\alpha \kappa}{K}-M \bigg)^2 +4\frac{\alpha^2\kappa}{K}}.
\end{align*}
The characteristic polynomial of the Jacobian matrix evaluated at $X_2$ is given by 
\begin{align}
\label{eq:char1:b}
P(x)  = x^2 + P_1x + P_0
\end{align}
where 
$P_0 = P_{01}U^* + P_{00}$ and $P_1 = P_{11}U^* + P_{10}$, 
\begin{align*}
P_{00} & =   K\kappa\, \left( \alpha\,\delta_{T}\,K-\beta_{T}\,\nu_{u}\,K+\alpha\,
\kappa\,\delta_{T} \right), \\
P_{01} & = {\frac {K \big( K(\alpha \delta_T - \beta_T\nu_U)^2+\kappa\,{\delta_{T}}
^{2}{\alpha}^{2}+\alpha\,\beta_{T}\,\kappa\,\nu_{u}\,\delta_{T}
\big) }{\alpha\,\delta_{T}}}, \\
P_{10} & =   K\kappa\,\delta_{T}\, \left( \alpha+\delta_{T} \right),  \\
P_{11} & =  K \left( \alpha\,\delta_{T}-\beta_{T}\,\nu_{u}+{\delta_{T}}^{2} 
\right). 
\end{align*} 
Using  Routh--Hurwitz criterion, the endemic steady state, $X_2$, 
is locally asymptotically stable if $P_ 1>0$ and $P_0>0$. 
	
We notice that $P_{01}>0$. This means that $P_0 >0$ 
if and only if $U^* > -P_{00}/P_{01}$ which is equivalent to 
\begin{align*}
U_r & > -\frac{P_{00}}{P_{01}}-U_p \Leftrightarrow U_r ^2 
- \left( \frac{P_{00}}{P_{01}} +U_p\right)^2 > 0.	 
\end{align*}
With the help of Maple$^{\tiny{\textregistered}}$, we find that 
\begin{align*}
U_r^2 - \left( \frac{P_{00}}{P_{01}} +U_p\right)^2 
& = \alpha\,\beta_{T}\,{\kappa}^{2}\nu_{u}\,\delta_{T}\, 
\bigg( K^2 (\alpha \delta_T - \beta_T\nu_U)^2
+2\,K\kappa\,{\delta_{T}}^{2}{\alpha}^{2}\\
& \quad \quad +2\,K\alpha\,\beta_{T}\,\kappa\,\nu_{u}\,\delta_{T}+{
\alpha}^{2}{\kappa}^{2}{\delta_{T}}^{2} \bigg) > 0. 
\end{align*}
Therefore $P_0 >0$.
	
For $P_1$, we discuss the following cases: 
\begin{enumerate}
\item[(i)] If $P_{11}> 0$ then $P_1>0$. 
\item[(ii)] If $P_{11}<0$ then $P_1>0$ if and only if 
$U^* < -P_{10}/P_{11}$ which is equivalent to 
\begin{align*}
U_r^2 - \left(\frac{P_{10}}{P_{11}}  + U_p \right)^2 < 0.
\end{align*}
Using Maple$^{\tiny{\textregistered}}$, we find that 
\[
U_r^2 - \left(\frac{P_{10}}{P_{11}} +U_p \right)^2 
= \beta_{T}\,\kappa\,\nu_{u}\,\delta_{T}\, \bigg( K(\alpha\,\delta_{T}\,
- \beta_{T}\,\nu_{u}\,+{\delta_{T}}^{2})-\alpha \kappa(\alpha+\delta_{T}) \bigg) 
<0.
\]
This implies that $P_1>0$.
\end{enumerate}
Therefore the endemic steady state, $X_2$, is locally asymptotically stable. 
\end{proof}

\noindent B.2. Proof of stability of endemic state, $C_2$, in Proposition~\ref{prop:2}

\begin{proof} 
The characteristic polynomial, $P(x)$, of the Jacobian evaluated at $C_2$ is given by 
{\small 
\begin{align} 
\label{eq:chapolyc2}
P(x) & = \newcommand{\Bold}[1]{\mathbf{#1}}x^{3} + \left(\frac{2 \, U \alpha}{K} 
- \alpha + \delta_{T} + \frac{C \delta_{U}}{C + K_{c}} + \frac{U \beta_{T} 
\nu_{U}}{{\left(U + \kappa\right)} \delta_{T}} + \psi\right) x^{2} \\
&  +\bigg(\frac{2 \, U \alpha \delta_{T}}{K} - \alpha \delta_{T} 
+ \frac{C \delta_{T} \delta_{U}}{C + K_{c}} + \frac{2 \, U \beta_{T} \nu_{U}}{U + \kappa} 
- \frac{U^{2} \beta_{T} \nu_{U}}{{\left(U + \kappa\right)}^{2}}  \\
& + \frac{2 \, U \alpha \psi}{K} - \alpha \psi + \delta_{T} \psi 
+ \frac{C \delta_{U} \psi}{C + K_{c}} + \frac{U \beta_{T} \nu_{U} 
\psi}{{\left(U + \kappa\right)} \delta_{T}}\bigg) x \\
&+ \frac{2 \, U \alpha \delta_{T} \psi}{K} - \alpha \delta_{T} \psi 
+ \frac{C \delta_{T} \delta_{U} \psi}{C + K_{c}} + \frac{2 \, U \beta_{T} 
\nu_{U} \psi}{U + \kappa} - \frac{U^{2} \beta_{T} \nu_{U} \psi}{{\left(U + \kappa\right)}^{2}}=0. 
\end{align}}
The conditions for stability of the endemic state, $C_2$, therefore follow 
from the Routh--Hurwitz criterion and are stated as: 
\begin{align}
\label{eq:conditions-forstabilityofc2}
a_2: & = \frac{2 \, U^* \alpha}{K} - \alpha + \delta_{T} 
+ \frac{C^* \delta_{U}}{C^* + K_{c}} + \frac{U^* \beta_{T} \nu_{U}}{{\left(U^* 
+ \kappa\right)} \delta_{T}} + \psi > 0, \nonumber \\
a_1 :& = \frac{2 \, U^* \alpha \delta_{T}}{K} - \alpha \delta_{T} 
+ \frac{C^* \delta_{T} \delta_{U}}{C^* + K_{c}} + \frac{2 \, U^* \beta_{T} \nu_{U}}{U^* 
+ \kappa} - \frac{U^{*^2} \beta_{T} \nu_{U}}{{\left(U^* + \kappa\right)}^{2}}  \nonumber \\
& \quad \quad \quad + \frac{2 \, U \alpha \psi}{K} - \alpha \psi + \delta_{T} \psi 
+ \frac{C^* \delta_{U} \psi}{C^* + K_{c}} + \frac{U^* \beta_{T} \nu_{U} \psi}{{\left(U^* 
+ \kappa\right)} \delta_{T}} > 0, \nonumber \\
& \hspace{-3em} a_0:  = \frac{2 \, U^* \alpha \delta_{T} \psi}{K} - \alpha \delta_{T} \psi 
+ \frac{C^* \delta_{T} \delta_{U} \psi}{C^* + K_{c}} + \frac{2 \, U^* \beta_{T} \nu_{U} \psi}{U^* 
+ \kappa} - \frac{U^{*^2} \beta_{T} \nu_{U} \psi}{{\left(U^* 
+ \kappa\right)}^{2}} > 0 ~ \text{and} ~ a_2a_1>a_0.
\end{align}
\end{proof}


\section{Appendix C} 
\label{appendixC} ~~~

\medskip

\noindent C.1. Proof of  conditions \textbf{ $A_1 -A_5$} for the derivation of $R_0$

\begin{proof}
Denote $x=(x_1,x_2,\cdots x_5)^T = (U,I,V,E_V,E_T)^T$ with $x_i \geq 0$ be the set 
of cell compartments divided into infected cells $i=1,\cdots,m$ and uninfected 
cells $i=m+1,\cdots, n$. Define $X^0$ to be the set of virus or infection free states: 
\[
X^0: = \{x\geq 0: x_i 0,~  \text{for all} ~ i = 1,\cdots, m \}.
\]
Let $\mathcal{F}_i (t,x)$ be the input rate of newly infected cells 
in the $i^{\text{th}}$ compartment. Let $\mathcal{V}_i^+(t,x)$ and 
$\mathcal{V}_i^-(t,x)$ respectively be the input rate of cells by other 
means and rate of transfer of cells out of a compartment $i$. Thus 
the system of Equations (\ref{eq:uninfected})--(\ref{eq:drug}) take the form 
\[
\frac{dx_i}{dt} = \mathcal{F}_i(t,x)-\mathcal{V}_i (t,x). 
\] 
We now proceed to prove the assumptions  \textbf{ $A_1 -A_5$}: 
	
\textbf{$A_1$}: For each $1\leq i \leq n$, the functions $\mathcal{F}_i(t,x)$, 
$\mathcal{V}_i^+$ and $\mathcal{V}_i^-$ are non-negative and continuous on 
$\mathbb{R} \times \mathbb{R}^n_+$ and continuously differentiable with 
respect to $x$. From Equations (\ref{eq:uninfected})--(\ref{eq:drug})
\begin{equation}
\label{eq:fv}
\mathcal{F}_i = \left( \begin{array}{c}
\frac{\beta U V}{K_u + U } \\ 
0 
\end{array} \right),
\quad \mathcal{V}_i^+ 
=   \left( 
\begin{array}{c}
\delta I + \nu_I E_T I + \frac{\delta_I I C}{K_c +C} \\ 
\gamma v 
\end{array} \right), \quad  \mathcal{V}_i^- =   \left( \begin{array}{c}
\tau E_V E_V I \\ 
b \delta  
\end{array} \right).
\end{equation}
Clearly, $\mathcal{F}_i$, $\mathcal{V_i}_i^+$ and $\mathcal{V}_i^-$ satisfy $A_1$. 
	
\textbf{$A_2$}: If $x_i = 0$, then $\mathcal{V}_i=0$. Clearly, from (\ref{eq:fv}), 
if the state variables are all equal to zero, then $\mathcal{V}_i=0$. 
	
\textbf{$A_3$}: $\mathcal{F}_i=0$ for $i>m$. This can be seen from the compartments $U$, $E_V$ and $E_T$.
	
\textbf{$A_4$}: If $x\in X^0$ , then $\mathcal{F}_i(x) = \mathcal{V}^+_i (x) = 0$ for $i = 1,\cdots , m$. 
At $X^0$, $V$ and $I$ are zero, implying that $\mathcal{F}_i = \mathcal{V}^+_i =0 $.   
	
\textbf{$A_5$}: If $\mathcal{F}_i=0$, then all the eigenvalues of $D \mathcal{V} (X^0)$ 
have positive real parts. This can be ascertained from Equation (\ref{eq:fv}).  
\end{proof}

\noindent C.2. Proof of Theorem~\ref{thm:R0-constantinfusion}

\begin{proof}
From Equation (\ref{eq:fv}),
\[
F = \left( 
\begin{array}{cc}
0 &  \frac{\beta U}{K_u + U } \\ 
0  & 0 
\end{array} \right),
\quad  V =  \left( 
\begin{array}{cc}
\delta +\nu_IE_T + \frac{\delta_I C^*}{K_c + C}+\tau E_v & 0 \\ 
b\delta  & \gamma 
\end{array} \right), 
\]
\[
~ FV^{-1} = \frac{1}{\gamma(\delta + \nu_I E_T^* 
+\frac{\delta_IC^*}{K_c + C^*})}\left( 
\begin{array}{cc}
\frac{b \delta \beta U}{K_U + U } 
& \frac{b\beta U^*}{(K_u + U^*)} \\ 
0 & 0\end{array} \right).
\]
Thus 
\begin{align}
\label{eq:Ro-proof}
R_0 = \rho(FV^{-1}) 
=  \frac{b\beta \delta U^*(K_c + C^*)}{\gamma \big[(K_c+ C^*) 
(\nu_I E_T^* +\delta) + \delta_I C^* \big]\big(K_u + U^* \big)}.
\end{align}
\end{proof}


\section{Appendix D}\label{appendixD} ~~~

\medskip

\noindent D.1. Elasticity indices of $R_0$

{\footnotesize 
\begin{align}
\begin{split}
e_b & = 1\\
e_{\beta} & = 1\\
e_{\gamma} & = -1\\
e_{\delta} & = \newcommand{\Bold}[1]{\mathbf{#1}}-\frac{{\Gamma_1\left(K K_{c} \alpha \psi 
+ K_{c} K_{u} \alpha \psi + K \alpha q + K_{u} \alpha q - K \delta_{U} q\right)}  
{\left(K_{c} \delta \psi + \delta q + \delta_{I} q\right)} \gamma}{{{\left(K_{c} 
\alpha \psi + \alpha q - \delta_{U} q\right)} {\left(K_{c} \psi + q\right)} K b \beta}}\\ 
e_{\alpha} & = \newcommand{\Bold}[1]{\mathbf{#1}}-\frac{{\Gamma_2\left(K K_{c} \alpha \psi 
+ K_{c} K_{u} \alpha \psi + K \alpha q + K_{u} \alpha q - K \delta_{U} q\right)} {\left(K_{c} \delta \psi 
+ \delta q + \delta_{I} q\right)} \alpha \gamma}{{\left(K_{c} \alpha \psi + \alpha q 
- \delta_{U} q\right)} {\left(K_{c} \psi + q\right)} K b \beta \delta}\\
e_{\delta_U} & = \newcommand{\Bold}[1]{\mathbf{#1}}\frac{{\Gamma_3\left(K K_{c} 
\alpha \psi + K_{c} K_{u} \alpha \psi + K \alpha q + K_{u} \alpha q 
- K \delta_{U} q\right)} {\left(K_{c} \delta \psi + \delta q 
+ \delta_{I} q\right)} \delta_{U} \gamma}{{\left(K_{c} \alpha \psi + \alpha q 
- \delta_{U} q\right)} {\left(K_{c} \psi + q\right)} K b \beta \delta}\\
e_{\delta_I} & = \newcommand{\Bold}[1]{\mathbf{#1}}-\frac{\delta_{I} q}{K_{c} 
\delta \psi + \delta q + \delta_{I} q}\\
e_{K_c} & = \newcommand{\Bold}[1]{\mathbf{#1}}\frac{\Gamma_4 K_{c} \psi q}{{\left(K K_{c} 
\alpha \psi + K_{c} K_{u} \alpha \psi + K \alpha q + K_{u} \alpha q 
- K \delta_{U} q\right)} {\left(K_{c} \alpha \psi + \alpha q - \delta_{U} 
q\right)} {\left(K_{c} \delta \psi + \delta q 
+ \delta_{I} q\right)} {\left(K_{c} \psi + q\right)}} \\
e_{K_u} & = \newcommand{\Bold}[1]{\mathbf{#1}}-\frac{{\left(K_{c} \psi 
+ q\right)} K_{u} \alpha}{K K_{c} \alpha \psi + K_{c} K_{u} \alpha \psi 
+ K \alpha q + K_{u} \alpha q - K \delta_{U} q}\\
e_{\psi} & = \newcommand{\Bold}[1]{\mathbf{#1}}\frac{\Gamma_4 K_{c} 
\psi q}{{\left(K K_{c} \alpha \psi + K_{c} K_{u} \alpha \psi + K \alpha q 
+ K_{u} \alpha q - K \delta_{U} q\right)} {\left(K_{c} \alpha \psi + \alpha q 
- \delta_{U} q\right)} {\left(K_{c} \delta \psi + \delta q 
+ \delta_{I} q\right)} {\left(K_{c} \psi + q\right)}} \\
e_q & = \newcommand{\Bold}[1]{\mathbf{#1}}-\frac{\Gamma_4 K_{c} 
\psi q}{{\left(K K_{c} \alpha \psi + K_{c} K_{u} \alpha \psi + K \alpha q 
+ K_{u} \alpha q - K \delta_{U} q\right)} {\left(K_{c} \alpha \psi + \alpha q 
- \delta_{U} q\right)} {\left(K_{c} \delta \psi + \delta q 
+ \delta_{I} q\right)} {\left(K_{c} \psi + q\right)}},
\end{split}
\end{align}}
where
\begin{align*} 
\Gamma_1 &= \frac{{\left(K_{c} \alpha \psi + \alpha q - \delta_{U} q\right)} {\left(K_{c} \psi 
+ q\right)}^{2} K b \beta \delta}{{\left(K K_{c} \alpha \psi + K_{c} K_{u} \alpha \psi 
+ K \alpha q + K_{u} \alpha q - K \delta_{U} q\right)} {\left(K_{c} \delta \psi 
+ \delta q + \delta_{I} q\right)}^{2} \gamma}\\
& \quad \quad \quad - \frac{{\left(K_{c} \alpha \psi + \alpha q 
- \delta_{U} q\right)} {\left(K_{c} \psi + q\right)} K b \beta}{{\left(K K_{c} \alpha \psi 
+ K_{c} K_{u} \alpha \psi + K \alpha q + K_{u} \alpha q - K \delta_{U} q\right)} 
{\left(K_{c} \delta \psi + \delta q + \delta_{I} q\right)} \gamma}\\
\Gamma_2 & = \frac{{\left(K K_{c} \psi + K_{c} K_{u} \psi + K q 
+ K_{u} q\right)} {\left(K_{c} \alpha \psi + \alpha q - \delta_{U} q\right)} 
{\left(K_{c} \psi + q\right)} K b \beta \delta}{{\left(K K_{c} \alpha \psi 
+ K_{c} K_{u} \alpha \psi + K \alpha q + K_{u} \alpha q - K \delta_{U} q\right)}^{2} 
{\left(K_{c} \delta \psi + \delta q + \delta_{I} q\right)} \gamma}\\
& \quad \quad \quad - \frac{{\left(K_{c} \psi + q\right)}^{2} 
K b \beta \delta}{{\left(K K_{c} \alpha \psi + K_{c} K_{u} \alpha \psi + K \alpha q 
+ K_{u} \alpha q - K \delta_{U} q\right)} {\left(K_{c} \delta \psi 
+ \delta q + \delta_{I} q\right)} \gamma} \\
\Gamma_3 & = \frac{{\left(K_{c} \alpha \psi + \alpha q - \delta_{U} q\right)} 
{\left(K_{c} \psi + q\right)} K^{2} b \beta \delta q}{{\left(K K_{c} \alpha \psi 
+ K_{c} K_{u} \alpha \psi + K \alpha q + K_{u} \alpha q - K \delta_{U} q\right)}^{2} 
{\left(K_{c} \delta \psi + \delta q + \delta_{I} q\right)} \gamma}\\
& \quad \quad \quad - \frac{{\left(K_{c} \psi + q\right)} K b \beta 
\delta q}{{\left(K K_{c} \alpha \psi + K_{c} K_{u} \alpha \psi + K \alpha q 
+ K_{u} \alpha q - K \delta_{U} q\right)} {\left(K_{c} \delta \psi 
+ \delta q + \delta_{I} q\right)} \gamma}\\
\Gamma_4 & = K K_{c}^{2} \alpha^{2} \delta_{I} \psi^{2} + K_{c}^{2} 
K_{u} \alpha^{2} \delta_{I} \psi^{2} + K_{c}^{2} K_{u} \alpha \delta 
\delta_{U} \psi^{2} + 2 \, K K_{c} \alpha^{2} \delta_{I} \psi q \\
&\quad \quad \quad + 2 \, K_{c} K_{u} \alpha^{2} \delta_{I} \psi q 
+ 2 \, K_{c} K_{u} \alpha \delta \delta_{U} \psi q  - 2 \, K K_{c} 
\alpha \delta_{I} \delta_{U} \psi q + K \alpha^{2} \delta_{I} q^{2} \\
& \quad \quad \quad + K_{u} \alpha^{2} \delta_{I} q^{2} + K_{u} \alpha 
\delta \delta_{U} q^{2} - 2 \, K \alpha \delta_{I} \delta_{U} q^{2} 
+ K \delta_{I} \delta_{U}^{2} q^{2}.
\end{align*}

\noindent D.2. Elasticity indices of endemic equilibrium.

Consider a system of differential equations dependent on a parameter $p$
\begin{align}
x^{\prime} = f(x,p) 
\end{align}
where 
\[
f(x,p) = \left(   
\begin{array}{c}
f_1(x,p) \\ 
f_2(x,p) \\ 
.\\
.\\
.\\
f_n(x,p)
\end{array}\right), 
\quad  x = \left(   
\begin{array}{c}
x_1 \\ 
x_2 \\ 
.\\
.\\
.\\
x_n
\end{array}\right). 
\]
At equilibrium point $x^* = x^*(p)$ satisfies $f(x^*(p),p) = 0$. 
Differentiating with respect to $p$, one obtains
\begin{align*}
\frac{\partial f}{\partial x_1} \left(x^*(p),p \right)x^{\prime}_1(p) 
+ \frac{\partial f}{\partial x_2} \left(x^*(p),p \right)x^{\prime}_2(p) 
+\cdots + \frac{\partial f}{\partial x_n} \left(x^*(p),p \right)x^{\prime}_n(p) 
+ \frac{\partial }{\partial p} \left(x^*(p),p \right) = 0. 
\end{align*}
This implies that 
\begin{align*}
\left(   \begin{array}{cccccc}
\frac{\partial f_1}{\partial x_1} & \frac{\partial f_1}{\partial x_1} 
& . &. &. & \frac{\partial f_1}{\partial x_n}  \\ 
\frac{\partial f_2}{\partial x_2}  & \frac{\partial f_2}{\partial x_2} 
& . & . & . & \frac{\partial f_2}{\partial x_n} \\ 
. & . & . & . &. & .\\
. & . & . & . &. & .\\
. & . & . & . &. & . \\   
\frac{\partial f_1}{\partial x_n} & \frac{\partial f_n}{\partial x_2} 
& . &. &. & \frac{\partial f_n}{\partial x_n}
\end{array}\right) \left(   
\begin{array}{c}
x^{\prime}_1 (p) \\ 
x^{\prime}_2 (p)\\ 
.\\
.\\
.\\
x^{\prime}_n (p)
\end{array}\right) = -\left(   
\begin{array}{c}
\frac{\partial f_1}{\partial p} \\ 
\frac{\partial f_2}{\partial p}\\ 
.\\
.\\
.\\
\frac{\partial f_n}{\partial p}
\end{array}\right)
\end{align*}

\begin{align*}
\left(   \begin{array}{c}
x^{\prime}_1 (p) \\ 
x^{\prime}_2 (p)\\ 
.\\
.\\
.\\
x^{\prime}_n (p)
\end{array}\right) = -J^{-1}_{x^*_p} \left(   
\begin{array}{c}
\frac{\partial f_1}{\partial p} \\ 
\frac{\partial f_2}{\partial p}\\ 
.\\
.\\
.\\
\frac{\partial f_n}{\partial p}
\end{array}\right).
\end{align*}
Multiplying both sides with the diagonal matrix, $diag(p/x^*_i)$, matrix 
\begin{align*}
\begin{pmatrix}
\frac{p}{x^*_1} & & \\
& \ddots & \\
& & \frac{p}{x^*_n}
\end{pmatrix}  \left(   \begin{array}{c}
x^{\prime}_1 (p) \\ 
x^{\prime}_2 (p)\\ 
.\\
.\\
.\\
x^{\prime}_n (p)
\end{array}\right) = -\begin{pmatrix} 
\frac{p}{x^*_1} & & \\
& \ddots & \\
& & \frac{p}{x^*_n}
\end{pmatrix} J^{-1} \left(   \begin{array}{c}
\frac{\partial f_1}{\partial p} \\ 
\frac{\partial f_2}{\partial p}\\ 
.\\
.\\
.\\
\frac{\partial f_n}{\partial p}
\end{array}\right).
\end{align*}
The sensitivity indices of the steady state variables at equilibrium 
$x^*_i$ with respect to a parameter $p$ are therefore given by
\begin{align}
\label{eq:derived-sensitivity}
\left(   \begin{array}{c}
\Gamma^{x^*_1}_p \\ 
\Gamma^{x^*_2}_p \\ 
.\\
.\\
.\\
\Gamma^{x^*_n}_p 
\end{array}\right)  
= -K J^{-1} \frac{\partial f}{\partial p} \left(x^*(p),p \right).
\end{align}
\end{appendices}


\section*{Acknowledgments} 

Joseph Malinzi was jointly supported 
by the University of Pretoria and DST/NRF SARChI Chair 
in Mathematical Models and Methods in Bioengineering and Biosciences.

Amina Eladdadi and K.A. Jane White would like to acknowledge and thank the 
UK-QSP Network (Grant-EP/N005481/1) for their financial support to attend 
the QSP-1st Problem Workshop and collaborate on this research. 

Torres was supported by FCT through CIDMA, project UID/MAT/04106/2013, 
and TOCCATA, project PTDC/EEI-AUT/2933/2014, 
funded by FEDER and COMPETE 2020.



\medskip

Submitted 27-March-2018; revised 04-July-2018; accepted 10-July-2018.

\medskip


\end{document}